\newtheorem{theorem}{\bf Theorem}[section]
\newtheorem{lemma}[theorem]{Lemma}
\newtheorem{problem}[theorem]{Problem}
\newtheorem{proposition}{Proposition}
\begin{document}

\title{Max-Bisections of graphs without perfect matching}

\author{Jianfeng Hou$^{1}$\thanks{Research supported by National Key R\&D Program of China (Grant No. 2023YFA1010202), National Natural Science Foundation of China (Grant No. 12071077), the Central Guidance on Local Science and Technology Development Fund of Fujian Province (Grant No. 2023L3003). Email: \texttt{jfhou@fzu.edu.cn}}, Shufei Wu$^2$\thanks{Email: \texttt{shufeiwu@hotmail.com}; Research supported by the Natural Science Foundation of China (No. 11801149), Key
Research Funds for the Universities of Henan Province (Grant No. 25A110003)}, Yuanyuan Zhong$^2$\thanks{Email: \texttt{zhongyy02023@126.com}},\\
\small{$^1$Center of Discrete Mathematics, Fuzhou University, Fujian, China, 350116}\\
{\small $^2$School of Mathematics and Information Science, Henan Polytechnic University,
Henan, China, 454003}}

\maketitle

\begin{abstract}
A bisection of a graph is   a bipartition of its vertex set such that the two resulting parts differ in size by
at most 1, and its size is the number of edges that connect vertices in the two parts. The perfect matching condition and  forbidden even cycles  subgraphs are essential in finding  large bisections of graphs. In this paper, we show that the perfect matching condition can be replaced by the minimum degree condition. Let $C_{\ell}$ be a cycle of length $\ell$ for $\ell\ge 3$, and let $G$ be  a $\{C_4, C_6\}$-free graph with $m$ edges and minimum degree at least 2.   We prove that  $G$ has a bisection of size at least $m/2+\Omega\left(\sum_{v\in V(G)}\sqrt{d(v)}\right)$.  As a corollary,   if  $G$ is also  $C_{2k}$-free for  $k\ge3$, then $G$ has a bisection of size at least
$m / 2+\Omega\left(m^{(2 k+1) /(2 k+2)}\right)$, thereby confirming a conjecture proposed by
 Lin and Zeng [J.  Comb. Theory A, 180 (2021), 105404].
\end{abstract}

\textbf{Keywords:} Max-Bisection, cycle, quasi-perfect matching, Shearer's bound

\textbf{MSC 2020:} 05C07, 05C75

\section{Introduction}
Let $G$ be a graph and $\mathcal{F}$ be a family of graphs.  We say $G$ is \emph{$\mathcal{F}$-free} if it does not contain any member of $\mathcal{F}$ as a subgraph. If $\mathcal{F}=\{H\}$, then we use $H$-free instead of $\{H\}$-free.  A \emph{bipartition} $(V_0,V_1)$ of $G$ is a partition of $V(G)$ satisfies  $V_0\cap V_1=\emptyset$ and $V_0\cup V_1=V(G)$. The \emph{size} of $(V_0,V_1)$, denoted by $e_G(V_0, V_1)$, is the number of  edges with one end in $V_0$ and the other in $V_1$. A \emph{bisection} of $G$ is a bipartition $(V_0, V_1)$ of $G$ with  $||V_0|-|V_1||\le 1$. The well-known \emph{Max-Cut problem}  is to find a bipartite $(V_0,V_1)$ of $G$  that maximizes $e_G(V_0, V_1)$.  We will drop the subscript when the confusion is unlikely.

Considering a random bipartition of a graph $G$ with $m$ edges, it is easy to see $G$ has a bipartition of size at least $m/2$. Answering a question of Erd\H{o}s, the bound was improved to $m/2+(\sqrt{8m+1}-1)/8$ by Edwards \cite{Edw1973, Edw1975}. Note that the bound is tight for complete graphs with odd orders. A natural step is to study  Max-Cuts of $H$-free graphs for a fixed graph $H$. It was initialed   by Erd\H{o}s and Lov\'{a}sz (see \cite{Erd1979}) who showed  every triangle-free graph with $m$ edges has a bipartition of size  at least $m/2+\Omega\left(m^{2 / 3}(\log m / \log \log m)^{1 / 3}\right)$. A breakthrough on this topic was given by Shearer \cite{She1992} who showed that every triangle-free $G$ with $n$ vertices, $m$ edges and degree sequence $d_{1}, d_{2},\ldots ,d_{n}$ admits  a bipartition of size at least
\begin{equation}\label{sh-bound}
\frac m2+\Omega\left(\sum_{i=1}^{n}\sqrt{d_{i}}\right).
\end{equation}
As a corollary, such a graph has a bipartition of size at least $m/2+\Omega(m^{3/4})$. The tight  bound on Max-Cut of triangle-free graphs, given by Alon \cite{Alo1996}, is $m/2+\Theta(m^{4/5})$.

Usually, the lower bound of \eqref{sh-bound}, known as \emph{Shearer's bound}, serves as  a pivotal foundation that has facilitated the derivation of several intriguing results in the field. For example, Alon, Krivelevich and Sudakov \cite{Alo2005} showed \eqref{sh-bound} holds for graphs with sparse neighborhood and proved every $C_{2k}$-free graph with $m$ edges has a bipartition of size  at least
$m/2+\Omega(m^{(2k+1)/(2k+2)})$, where $C_{\ell}$ denotes a cycle of length $\ell\ge 3$. An  exciting result, given by Glock, Janzer and Sudakov \cite{Glock2023}, is such a  bound  also holds for $C_{2k+1}$-free graphs. We refer the reader to \cite{Boll2002,Erd1967,Ma2019,Sco2005,Wu2023,Zen2017,Zen2018} for further problems and results in this direction.

In this paper, we focus on the \emph{Max-Bisection problem}: Find a bisection of a given graph  that maximizes its size. Compared to bipartition, bisections of graphs are more complicated to analyze. For example, as noticed in \cite{Erd1997,Pol1994}, the Edwards' bound implicitly implies that a connected graph $G$ with $n$ vertices and $m$ edges admits a bipartition of size at least $m/2+(n-1)/4$. However,  each bisection of the complete bipartite graph $K_{d,n-d}$ with $m$ edges has size at most $\lceil (m+d^2)/2 \rceil$. Motivated by Max-Cuts, Bollob\'as and Scott \cite{Bol2002} asked the following:
\begin{problem}[Bollob\'as and Scott \cite{Bol2002}]\label{B-S-Problem}
What are the largest and smallest cuts that we can guarantee with bisections of graphs?
\end{problem}

The  majority of  results are to find a bisection of size at least $m/2+cn$ for some $c>0$ in  $H$-free graphs with $n$ vertices and $m$ edges \cite{Fan2018,   jin2019, Lee2013,Hou2017,Ji2019,Liu2024,Xu2010,Xu2010balanced,Xu2008,Xu2014}. Forbidden even cycle subgraphs seems reasonable
 for Problem \ref{B-S-Problem}. A typical example, given by  Hou and Yan \cite{Hou2020}, shows that every connected $C_4$-free graph $G$ with $n$ vertices, $m$ edges and   minimum degree at least 2 admits a bisection of size at least $m/2+(n-1)/4.$ It is natural to give sufficient conditions  satisfying the Shearer's bound. The first step was given by Lin and Zeng \cite{Lin2021}  using a bisection version of Shearer's randomized algorithm.

\begin{theorem}[Lin and Zeng\cite{Lin2021}]\label{{C_{4},C_{6}}-free}
Let $G$ be a  $\{C_{4},C_{6}\}$-free graph  with $n$ vertices, $m$ edges and degree sequence $d_{1}, d_{2},\ldots ,d_{n}$. If $G$ has a perfect matching, then  $G$ admits a bisection of size at least
$m/2+\Omega(\sum_{i=1}^{n}\sqrt{d_{i}}).$
\end{theorem}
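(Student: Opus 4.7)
The plan is to adapt Shearer's randomized algorithm for Max-Cuts in triangle-free graphs to the bisection setting, using the perfect matching $M$ to convert vertex flips into balance-preserving swaps. Let $M = \{e_i = u_i v_i : 1 \le i \le n/2\}$. Parameterize bisections compatible with $M$ by i.i.d.\ Rademacher variables $X_1, \ldots, X_{n/2} \in \{\pm 1\}$, placing $u_i$ on side $X_i$ and $v_i$ on side $-X_i$; writing $\sigma_v$ for the resulting side of $v$, a uniform choice of $X$ gives expected cut size exactly $m/2$.

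For each matching edge $e_i$, let $D_i$ be the change in cut value upon swapping the sides of $u_i$ and $v_i$. A direct calculation gives
\[
D_i = \sum_{w \in N(u_i) \setminus \{v_i\}} \sigma_w \sigma_{u_i} + \sum_{w \in N(v_i) \setminus \{u_i\}} \sigma_w \sigma_{v_i},
\]
so swaps preserve the bisection and $\mathbb{E}[D_i] = 0$. Following Shearer's framework, the proof proceeds by a two-step rounding: first sample $X$ uniformly, then independently for each $i$ perform the swap with probability $\tfrac{1}{2} + c\,\mathrm{sign}(D_i)\,\min(|D_i|/T_i, 1)$ for a constant $c>0$ and threshold $T_i = \Theta(\sqrt{d(u_i)+d(v_i)})$. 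The expected cut gain is then of order $\sum_i \mathbb{E}|D_i|/T_i$ up to interference terms, reducing the target to a single-edge bound $\mathbb{E}|D_i| = \Omega(\sqrt{d(u_i)+d(v_i)})$ together with a bound on pairwise interference.

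For the single-edge estimate, conditional on $X_i$ the quantity $D_i$ is a signed sum of Rademacher variables $X_{j(w)}$ indexed by the matching edges containing the neighbors of $\{u_i, v_i\}$. By a Khintchine-type inequality, $\mathbb{E}|D_i| \gtrsim (\mathbb{E} D_i^2)^{3/2}/(\mathbb{E} D_i^4)^{1/2}$. The diagonal contribution yields $\mathbb{E} D_i^2 = \Theta(d(u_i) + d(v_i))$ up to corrections from ``matching triangles'' at $u_i$ or $v_i$. The off-diagonal contributions to $\mathbb{E} D_i^4$ correspond to quadruples of neighbors of $\{u_i, v_i\}$ whose matching-edge labels pair up---equivalently, to short even closed walks through $e_i$ using one or two additional matching edges joined by two or four non-matching edges. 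These are precisely the $C_4$ and $C_6$ configurations forbidden by hypothesis, so $\mathbb{E} D_i^4 = O((d(u_i)+d(v_i))^2)$ and hence $\mathbb{E}|D_i| = \Omega(\sqrt{d(u_i)+d(v_i)})$. Summation combined with $\sqrt{d(u_i)+d(v_i)} \ge (\sqrt{d(u_i)}+\sqrt{d(v_i)})/\sqrt{2}$ recovers $\Omega(\sum_v \sqrt{d(v)})$.

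The main obstacle I expect is controlling the interference between simultaneous swaps at distinct matching edges $e_i, e_j$. Cross-correlations between $D_i$ and $D_j$ arise from pairs of edges $(e, e')$ with $e$ incident to $\{u_i,v_i\}$ and $e'$ incident to $\{u_j, v_j\}$ whose far endpoints are matched via $M$; each such contribution is an alternating cycle through $e_i$ and $e_j$, again forced to be a $C_4$ or $C_6$ and hence absent. Turning this into a second-moment bound on the total swap gain, so that the realized improvement concentrates around its expected value $\Omega(\sum_v \sqrt{d(v)})$, requires a careful count of these configurations by subgraph type and a choice of thresholds $T_i$ sensitive to local structure. Once this interference analysis is in place, derandomization via the method of conditional expectations delivers a deterministic bisection attaining the claimed bound.
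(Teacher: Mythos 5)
Your skeleton — i.i.d.\ signs along the perfect matching followed by a biased second-stage swap at each matching edge, with the local gain controlled by anti-concentration — is the same general architecture as Shearer's algorithm and Lin--Zeng's bisection version of it. The genuine gap is exactly the point you defer, and the one concrete claim you make about it is wrong. The configurations that couple the swaps at $e_i$ and $e_j$ are not forbidden: the pattern you describe ($u_i\sim w$, $u_j\sim w'$ with $ww'\in M$) is a five-vertex \emph{path} $v_iu_iww'u_jv_j$, not a cycle, so $\{C_4,C_6\}$-freeness says nothing about it; and, more importantly, any direct edge between the two pairs couples them — a single such edge is the generic situation (every non-matching edge joins two distinct pairs), and two such edges are still allowed since they must share an endpoint and form a triangle with a matching edge. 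Moreover the interference is not merely a correlation between $D_i$ and $D_j$: if two pairs joined by an edge are both swapped, that edge's cut status is unchanged although $D_i$ and $D_j$ each record it as flipped, so the realized gain is not $\sum_i(\text{swap}_i)\,D_i$; since your swap probabilities are functions of the first-round signs, these correction terms are correlated with the swap indicators and are a priori of the same order as the main term $\sum_i\mathbb{E}[D_i^2]/T_i$. Nothing in the proposal bounds them, so the theorem does not follow from what is written.

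This is precisely where the real work lies in Lin--Zeng's proof (and in Lemma \ref{th-main} here): instead of bounding the expected total swap gain, one fixes a non-matching edge $uv$ that is the \emph{unique} edge between its two pairs and computes the exact probability $\tfrac12+\tfrac14(p_{uv}-q_{uv})$ that it is cut after the second stage, conditioning on $h(u),h(v)$, classifying the finitely many configurations through which the stability of $e_u$ and $e_v$ interact (common neighbours, neighbours of the two pairs matched to each other, etc.), using $\{C_4,C_6\}$-freeness and the choice of the matching to show each type occurs $O(1)$ times, and then comparing binomial tail sums. Edges coming in pairs between two matching edges (quasi-triangles) get no bonus — they are cut with probability exactly $1/2$ — and their number must be bounded separately ($C_4$-freeness gives at most $n$ of them). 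That separate count also addresses a second soft spot in your sketch: $\mathbb{E}[D_i^2]=\Theta(d(u_i)+d(v_i))$ can fail, because a vertex may be adjacent to both endpoints of many matching edges (books of triangles with matched bases are $\{C_4,C_6\}$-free) and those contributions cancel; one must show globally that such degenerate configurations are negligible against $\sum_i\sqrt{d_i}$, which you acknowledge ("corrections from matching triangles") but do not do. Until the interference analysis and these configuration counts are carried out, the proposal is a plan rather than a proof.
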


Combining Theorem \ref{{C_{4},C_{6}}-free} and a standard degenerate argument, they also proved
\begin{theorem}[Lin and Zeng \cite{Lin2021}]\label{c2k}
For any integer $k \ge3$, let $G$ be a $\{C_4,C_6,C_{2k}\}$-free graph with $m$ edges. If $G$ has a perfect
matching, then  $G$ admits a bisection of size at
least
$m/2+\Omega(m^{(2k+1)/(2k+2)})$.
\end{theorem}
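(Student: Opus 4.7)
The plan is to apply Theorem~\ref{{C_{4},C_{6}}-free} directly to $G$ (which is $\{C_4,C_6\}$-free and carries a perfect matching), which at once produces a bisection of size at least $m/2 + c\sum_{v\in V(G)}\sqrt{d(v)}$. The whole theorem then reduces to establishing the Shearer-type estimate
\[
\sum_{v\in V(G)} \sqrt{d(v)} = \Omega\bigl(m^{(2k+1)/(2k+2)}\bigr),
\]
and the $C_{2k}$-free hypothesis enters only at this stage.

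To prove this estimate I would fix the threshold $D := m^{1/(k+1)}$ and split $V(G)$ into a high-degree part $H := \{v : d(v) \ge D\}$ and a low-degree part $L := V(G)\setminus H$. Since $m = e(G[H]) + e(G[L]) + e_G(L,H)$, exactly one of the following holds: either $e(G[H]) \ge m/2$, or more than $m/2$ edges are incident to $L$; I treat these cases separately.

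If $e(G[H]) \ge m/2$, then the subgraph $G[H]$ is still $C_{2k}$-free, so the Bondy--Simonovits bound $\mathrm{ex}(n,C_{2k}) = O(n^{1+1/k})$ forces $|H| = \Omega(m^{k/(k+1)})$. Since each vertex of $H$ contributes at least $\sqrt{D}$ to the sum,
\[
\sum_{v\in V(G)} \sqrt{d(v)} \ge |H|\sqrt{D} = \Omega\bigl(m^{k/(k+1)}\cdot m^{1/(2k+2)}\bigr) = \Omega\bigl(m^{(2k+1)/(2k+2)}\bigr).
\]
In the alternative case $\sum_{v\in L} d(v) > m/2$; since $d(v) < D$ for every $v\in L$, one has $\sqrt{d(v)} \ge d(v)/\sqrt{D}$, and so
\[
\sum_{v\in V(G)} \sqrt{d(v)} \ge \frac{1}{\sqrt{D}}\sum_{v\in L} d(v) > \frac{m}{2\sqrt{D}} = \Omega\bigl(m^{(2k+1)/(2k+2)}\bigr).
\]

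The only delicate point is the calibration of $D$: equating the two case-bounds $m^{k/(k+1)}\sqrt{D}$ and $m/\sqrt{D}$ pins down $D = m^{1/(k+1)}$, and then both cases simultaneously deliver the target exponent $(2k+1)/(2k+2)$. Beyond this balancing there is no substantive obstacle: Bondy--Simonovits is used only as a black box, and the perfect-matching hypothesis is entirely consumed by Theorem~\ref{{C_{4},C_{6}}-free}.
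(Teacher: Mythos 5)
Your proposal is correct. The reduction you use is exactly the paper's: apply Theorem~\ref{{C_{4},C_{6}}-free} to get a bisection of size $m/2+\Omega\bigl(\sum_v\sqrt{d(v)}\bigr)$, then show that $C_{2k}$-freeness forces $\sum_v\sqrt{d(v)}=\Omega\bigl(m^{(2k+1)/(2k+2)}\bigr)$ via Bondy--Simonovits with the threshold $D=\Theta\bigl(m^{1/(k+1)}\bigr)$. Where you diverge is in the mechanism for that last estimate. The paper (following Alon's ``standard degenerate argument,'' spelled out in Section~\ref{SEC:pf-thm-2} for Theorem~\ref{the1.2}) first shows $G$ is $(D-1)$-degenerate --- any subgraph of minimum degree $\ge D$ would violate Bondy--Simonovits --- then takes a degeneracy ordering with back-degrees $d_i^+<D$ and concludes in one line that $\sum_i\sqrt{d_i}\ge\sum_i\sqrt{d_i^+}>\sum_i d_i^+/\sqrt{D}=m/\sqrt{D}$. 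You instead partition the vertex set by the degree threshold $D$ and run a dichotomy: if at least half the edges live inside the high-degree part $H$, Bondy--Simonovits lower-bounds $|H|$ and each such vertex contributes $\sqrt{D}$; otherwise at least half the edges meet $L$ and $\sqrt{d(v)}\ge d(v)/\sqrt{D}$ there. Both computations land on the same exponent, and both consume the perfect-matching hypothesis entirely inside Theorem~\ref{{C_{4},C_{6}}-free}. Your version trades the degeneracy/ordering machinery for a two-case analysis and applies Bondy--Simonovits to the induced subgraph $G[H]$ rather than to a hypothetical subgraph of large minimum degree; the paper's version handles all edges uniformly in a single chain of inequalities. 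Neither has an advantage in strength; the choice is a matter of taste.
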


We remark that the proof of Theorem \ref{{C_{4},C_{6}}-free} relies  heavily on the existence  of  perfect matchings in graphs. In \cite{Lin2021}, the authors conjectured that the existence  of  perfect matchings can be replaced by the minimum degree condition, and posed the following problem.

\begin{problem}[Lin and Zeng \cite{Lin2021}]\label{pro1.1}
For a fixed integer $k \geq 3$, does every $\{C_4,C_6,C_{2k}\}$-free graph with $m$ edges and minimum degree at least 2 admit a bisection of size at least
$m/2+\Omega(m^{(2k+1)/(2k+2)})$?
\end{problem}

In this paper, we extend Theorem \ref{{C_{4},C_{6}}-free} to graphs without a perfect matching and provide a comprehensive confirmation of  Problem \ref{pro1.1}.

\begin{theorem}\label{the1.1}
Let $G$ be a connected $\{C_{4},C_{6}\}$-free graph with $n$ vertices, $m$ edges and degree sequence $d_{1}\ge  d_{2}\ge \cdots \ge d_{n}\ge 2$. Then there is a constant $\xi>0$ such that $G$ admits a bisection of size at least
\[\frac m2+\xi\sum_{i=1}^{n}\sqrt{d_{i}}.\]
\end{theorem}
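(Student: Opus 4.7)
The plan is to imitate Lin and Zeng's proof of Theorem~\ref{{C_{4},C_{6}}-free}, replacing their perfect matching by a \emph{quasi-perfect matching}: a spanning subgraph $F\subseteq G$ whose components are matching edges (copies of $K_2$) together with a controlled number of three-vertex gadgets (paths $P_3$, or possibly triangles). The first part of the proof is a structural lemma asserting that every connected graph with $\delta\ge 2$ contains such an $F$. Starting from a maximum matching $M_0$ of $G$, any unmatched vertex has at least two neighbors in $V(M_0)$ by the maximality of $M_0$; a Berge/augmenting-type argument then converts each unmatched vertex into the middle (or an endpoint) of a $P_3$-component without creating new unmatched vertices, and the $\{C_4,C_6\}$-freeness rules out degenerate configurations in which several unmatched vertices would be forced to share the same matching edge.

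Given such an $F$, one runs a biased randomized coloring in the spirit of Shearer. For each $K_2$-component $\{u,v\}$ of $F$, place $(u,v)\mapsto(V_0,V_1)$ with probability $\tfrac12+\epsilon_{uv}$ and $(V_1,V_0)$ otherwise; for each $P_3$-component $uvw$, flip an analogously biased coin between placing the two endpoints in $V_0$ with the middle in $V_1$ and the reverse arrangement. The biases $\epsilon$ are set using local second-neighborhood data at each vertex, exactly as in Lin--Zeng and Shearer. The $\{C_4,C_6\}$-freeness ensures that the neighborhoods of distinct components of $F$ overlap in a tightly controlled way, so the various $\epsilon$-contributions sum to an expected cut gain of $\Omega\bigl(\sum_i\sqrt{d_i}\bigr)$ without second-order cancellation.

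The principal obstacle, and the main new technical content of the theorem, is enforcing exact bisection balance. Each $K_2$-component contributes $0$ to $|V_0|-|V_1|$, but each $P_3$-component contributes $\pm 1$, and the biases perturb the expected imbalance. The plan is: (a) choose the biases so that $\sum\epsilon=0$, yielding zero expected imbalance; (b) couple the $\pm 1$ contributions of the $P_3$-gadgets pairwise so that the random imbalance is $O(1)$ with positive probability; and (c) conclude with a small deterministic swap inside a low-degree $P_3$-gadget to push $||V_0|-|V_1||$ down to at most $1$. The hard part is verifying that steps (a)--(c) sacrifice only $o\bigl(\sum_i\sqrt{d_i}\bigr)$ expected cut; this relies on the $\delta\ge 2$ and connectedness hypotheses to provide many low-impact swappable substructures within $F$, and on the $\{C_4,C_6\}$-freeness to keep the influence of each local correction strictly bounded.
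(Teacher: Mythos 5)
Your plan diverges from the paper at the decisive point and leaves the hard steps unproved. The paper's ``quasi-perfect matching'' is not a $\{K_2,P_3\}$-factor: it is a pairing of \emph{all} vertices into two-element classes, namely a maximum matching $M$ together with an arbitrary-looking (but in fact carefully chosen) pairing of the unmatched vertices $W=V(G)\setminus V(M)$ into non-adjacent pairs, which is possible because $W$ is independent. Each class, edge or non-edge, still sends exactly one vertex to each side in both stages of the algorithm, so the output is \emph{automatically} an exact bisection and your entire balance-repair scheme (a)--(c) becomes unnecessary. As you set things up, the $P_3$-gadgets produce a random imbalance of typical order $\sqrt{|W|}$, and your proposal to couple them and then do deterministic swaps is only a sketch: you would need the imbalance to be $O(1)$ \emph{simultaneously} with the cut gain being $\Omega(\sum_i\sqrt{d_i})$, and the swaps can each cost up to a vertex degree in cut edges; none of this is verified.

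The deeper gap is the probability estimate itself. Asserting that each edge is cut with probability $\tfrac12+\Omega\bigl(1/\sqrt{d(u)+d(u')}+1/\sqrt{d(v)+d(v')}\bigr)$ ``exactly as in Lin--Zeng'' begs the question: their computation is conditioned on every neighbor of $u,u',v,v'$ lying in a matched pair whose two labels are always opposite, and replacing some of those pairs by three-vertex gadgets changes the joint label distribution in every relevant neighborhood. Re-deriving the estimate in the presence of unmatched vertices is the entire content of the paper's Lemma \ref{th-main} (Section \ref{SEC:pf-main-lemma}); it requires choosing the pairing of $W$ via a maximum matching of an auxiliary graph on $W$ to kill certain dependency configurations (Proposition \ref{spc}), and a case analysis over the parameter $k_1+k_2+k_3-k_4$ using the inequalities of Lemmas \ref{lembb-bb}--\ref{lembb-bb3}. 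Your phrase ``without second-order cancellation'' is precisely the claim that has to be proved. Finally, you never address the regime $\sum_i\sqrt{d_i}=O(n)$: the probabilistic gain is partly cancelled by an $O(n)$ loss from pairs of classes joined by two edges (quasi-triangles), so the random argument only closes when $\sum_i\sqrt{d_i}>8(n-1)$, and the complementary sparse case must be handled separately (the paper uses the deterministic Hou--Yan bound $m/2+(n-1)/4$ from Theorem \ref{cn}).
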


\begin{theorem}\label{the1.2}
For any fixed integer $k \geq 3$, let $G$ be a connected $\{C_4,C_6,C_{2k}\}$-free graph with $m$ edges and minimum degree at least 2. Then there is a constant $c(k)>0$ such that $G$ admits  a bisection of size at least
\[\frac m2+c(k) m^{(2 k+1) /(2 k+2)}.\]
\end{theorem}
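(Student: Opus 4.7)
The plan is to derive Theorem \ref{the1.2} as an immediate corollary of Theorem \ref{the1.1} together with the Bondy--Simonovits bound $\mathrm{ex}(n, C_{2k}) \leq C_k n^{1+1/k}$. Since $G$ is a connected $\{C_4, C_6\}$-free graph with $\delta(G) \geq 2$, Theorem \ref{the1.1} already furnishes a bisection of $G$ of size at least $m/2 + \xi \sum_{v \in V(G)} \sqrt{d_G(v)}$. The whole task therefore reduces to proving the purely extremal estimate
\[
\sum_{v \in V(G)} \sqrt{d_G(v)} \;=\; \Omega\!\left(m^{(2k+1)/(2k+2)}\right)
\]
for any $C_{2k}$-free graph $G$ with $m$ edges and minimum degree at least $2$; inserting this back into Theorem \ref{the1.1} will then yield the claimed bisection of size $m/2 + c(k)\, m^{(2k+1)/(2k+2)}$.

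To establish the above estimate, I will run a standard degeneracy argument at threshold $t := \alpha\, m^{1/(k+1)}$, for a small constant $\alpha = \alpha(k) > 0$. Iteratively deleting any vertex whose current degree is strictly less than $t$ produces an ordered list $v_1, \dots, v_r$ of removed vertices with $d_{G_i}(v_i) < t$ (where $G_i$ denotes the graph immediately before $v_i$ is deleted) and a residual graph $G'$ of minimum degree at least $t$. The two useful identities are $\sum_{i=1}^{r} d_{G_i}(v_i) = m - e(G')$ and $d_G(v) \geq d_{G'}(v)$ for every surviving vertex, and the analysis splits according to whether $e(G') \geq m/2$ or $e(G') < m/2$. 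In the first case, Bondy--Simonovits applied to the $C_{2k}$-free graph $G'$ gives $|V(G')| \geq (e(G')/C_k)^{k/(k+1)} = \Omega(m^{k/(k+1)})$, and since every vertex of $G'$ has $G$-degree at least $t$, I obtain
\[
\sum_{v} \sqrt{d_G(v)} \;\geq\; |V(G')|\sqrt{t} \;=\; \Omega\!\left(m^{k/(k+1)} \cdot m^{1/(2k+2)}\right) \;=\; \Omega\!\left(m^{(2k+1)/(2k+2)}\right).
\]
In the second case, $\sum_i d_{G_i}(v_i) > m/2$, and the elementary inequality $\sqrt{x} \geq x/\sqrt{t}$ for $0 \leq x \leq t$ yields
\[
\sum_{v} \sqrt{d_G(v)} \;\geq\; \sum_{i} \sqrt{d_{G_i}(v_i)} \;\geq\; \frac{1}{\sqrt{t}} \sum_{i} d_{G_i}(v_i) \;>\; \frac{m/2}{\sqrt{t}} \;=\; \Omega\!\left(m^{(2k+1)/(2k+2)}\right),
\]
so the desired lower bound holds in either case.

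The genuine obstacle has already been cleared in Theorem \ref{the1.1}, which replaces the perfect-matching hypothesis of the earlier Lin--Zeng result by the weaker condition $\delta(G) \geq 2$; given that, the argument above is essentially mechanical. The only step that takes any thought is the choice $t = \Theta(m^{1/(k+1)})$, which is exactly tuned so that the $C_{2k}$-free/Bondy--Simonovits side and the ``many edges deleted'' side contribute matching lower bounds of order $m^{(2k+1)/(2k+2)}$.
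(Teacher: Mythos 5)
Your proposal is correct and follows essentially the same route as the paper: reduce to Theorem \ref{the1.1} by showing $\sum_v \sqrt{d_G(v)} = \Omega\bigl(m^{(2k+1)/(2k+2)}\bigr)$ via a degeneracy/peeling argument at threshold $\Theta\bigl(m^{1/(k+1)}\bigr)$ calibrated against the Bondy--Simonovits bound. The paper proves outright that $G$ is $(D-1)$-degenerate (any subgraph of minimum degree $D$ would violate Bondy--Simonovits) and then bounds $\sum_i \sqrt{d_i^+} \geq m/\sqrt{D}$, whereas you split into a dense-core case and a many-edges-peeled case; both versions are sound and use the same ingredients.
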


Considering $(n,d,\lambda)$-graphs and using the same argument as in \cite{Lin2021}, we remark  that both bounds are tight up to the value of $\xi$ in Theorem \ref{the1.1}, and  $c(k)$ for $2k\in\{6,10\}$ in Theorem \ref{the1.2}. Moreover, the minimum degree condition in both theorems is necessary by considering a star.  We will prove Theorem \ref{the1.1} in Section \ref{SEC:pf-thm-1} and Theorem \ref{the1.2} in Section \ref{SEC:pf-thm-2}.


\section{Preliminaries}\label{SEC:prelim}

\subsection{A random bisection algorithm}

In this subsection, we introduce a simply random algorithm to get a bisection of large size. It was initialed by Shearer  \cite{She1992} for Max-Cuts of triangle-free graphs, and extended by Lin and Zeng \cite{Lin2021} for Max-Bisections of $\{C_4, C_6\}$-free graphs with a perfect matching. In the following of the paper, we will omit unnecessary parentheses and commas in the set representation and for example, we write a pair $uv$ instead of $\{u,v\}$.

Let $G$ be a graph with $n$ vertices, where $n$ is even. We use $e(G)$ to denote the number of edges in $G$. For two disjoint subsets $S, T$ of $V(G)$, we use $E_G(S, T)$ denote the set of edges with one end in $S$ and the other in $T$, and $e_G(S,T)=|E_G(S, T)|$. Let $M$ be a maximum matching of $G$. Then $W = V(G)\setminus V(M_1)$ is an independent set. Assume $|W|=2k$. Partition $W$ into $k$  vertex-disjoint pairs $w_1w'_1,\ldots, w_{k}w'_{k}$, and call $\mathbb{M}=M\cup \{w_1w'_1,\ldots, w_{k}w'_{k}\}$ a \emph{quasi-perfect matching} of $G$. For each vertex $v \in V(G)$, we always use $v'$ to denote the vertex paired with $v$ under $\mathbb{M}$. That is, $vv'\in \mathbb{M}$. A folklore method to get a bisection of $G$ is to choice a quasi-perfect matching $\mathbb{M}$ of $G$, and then let either  $h(v)=0$  and  $h\left(v^{\prime}\right)=1$, or  $h(v)=1$  and  $h\left(v^{\prime}\right)=0$ for each pair $vv^{\prime}\in \mathbb{M}$.  Define  $V_{i}=h^{-1}(i)$ for $i=0,1$. Clearly, $(V_{0},V_{1})$ is a bisection of  $G$. For $i\in\{0,1\}$ and a pair $vv^{\prime}\in \mathbb{M}$,  let
\[
N_{i}(v)=\{u \in N(v):uv \notin \mathbb{M}, h(u)=i\},
\]
 and
\begin{align}\label{Def:sigma(vv')-stable}
\sigma(vv^{\prime})=|N_{1-h(v)}(v)|+|N_{1-h(v')}(v')|-|N_{h(v)}(v)|-|N_{h(v')}(v')|.
\end{align}
Call $vv^{\prime}$ \emph{stable} under $(V_{0},V_{1})$ if $\sigma(vv^{\prime}) \geq 0$, otherwise call it \emph{active}. Clearly, switching the value of $h$ on  active pairs can increase the size of the bisection.

Algorithm \ref{ALGO:Two-stage-Random-Algorithm} gives a random bisection of $G$ in time $O(|V(G)|+|E(G)|)$.

\begin{algorithm}[H]
\renewcommand{\thealgorithm}{1}
\caption{\textsc{Two-stage Random Algorithm}}\label{ALGO:Two-stage-Random-Algorithm}
\begin{algorithmic}

\State \textbf{Input:}
A graph $G$ with $n$ vertices and a quasi-perfect matching $\mathbb{M}$ of $G$.

\State \textbf{Output:}
A random bisection $(V_{0},V_{1})$ of $G$.
\For{each $vv^{\prime}\in \mathbb{M}$}
\State  Set randomly and independently either  $h(v)=0$  and  $h\left(v^{\prime}\right)=1$, or  $h(v)=1$  and  $h\left(v^{\prime}\right)=0$,  where both choices are equally likely.
\EndFor
\State  Define  $U_{i}=h^{-1}(i)$ for $i=0,1$
\For{each $vv^{\prime}\in \mathbb{M}$}
\State Calculate $\sigma(vv^{\prime})$.
\If{$vv'$ is active under $(U_0,U_1)$}
\State Set randomly and independently again, either  $h^{\prime}(v)=0$  and  $h^{\prime}\left(v^{\prime}\right)=1$, or  $h^{\prime}(v)=1$  and  $h^{\prime}\left(v^{\prime}\right)=0$, where both choices are equally likely.
\Else
\State Set  $h^{\prime}(v)=h(v)$  and  $h^{\prime}\left(v^{\prime}\right)=h\left(v^{\prime}\right)$.
\EndIf
\EndFor
\State Define  $V_{i}=\left(h^{\prime}\right)^{-1}(i)$  for $i=0,1$.
\end{algorithmic}
\end{algorithm}

The following  is key for Theorem \ref{the1.1}, whose  proof  will be postponed in Section \ref{SEC:pf-main-lemma}.
\begin{lemma}\label{th-main}
Let $G$ be a $\{C_{4},C_6\}$-free graph with even number of vertices and minimum degree at least 2. Then there exist a constant $\epsilon>0$  and a quasi-perfect matching $\mathbb{M}$ of $G$ such that the following holds: Suppose that $(V_{0},V_{1})$ is a random bisection given by Algorimth \ref{ALGO:Two-stage-Random-Algorithm}. For each edge $uv \in E(G)\setminus E(\mathbb{M})$, if $uv$ is the unique edge connecting $uu'$ and $vv'$, then  the  probability that $uv$ lies in $(V_0,V_1)$ is at least
\[\frac{1}{2}+\epsilon\left(\frac{1}{\sqrt{d(u)+d(u')}}+\frac{1}{\sqrt{d(v)+d(v')}}\right).\]
 \end{lemma}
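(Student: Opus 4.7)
The plan follows the two-round analysis of Shearer \cite{She1992}, as extended to Max-Bisections by Lin--Zeng \cite{Lin2021}, with the perfect matching hypothesis replaced by a carefully chosen quasi-perfect matching. First I would fix $\mathbb{M}$: start from any maximum matching $M$, observe that $W = V(G)\setminus V(M)$ is an independent set in which (by $\delta(G)\geq 2$) every vertex has degree at least $2$, and pair up $W$. The key structural property needed downstream is that for every pair $vv'\in\mathbb{M}$ the number of other pairs of $\mathbb{M}$ incident to $\{v,v'\}$ is $\Theta(d(v)+d(v'))$; this follows from the $C_4$-free hypothesis, which forces $|N(v)\cap N(v')|\leq 1$.

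Next I would re-parameterize Round~1 by independent uniform signs $Z_{ww'}\in\{+1,-1\}$ indexed by the pairs of $\mathbb{M}$, setting $\chi(w)=Z_{ww'}$, $\chi(w')=-Z_{ww'}$, and $S(w):=\sum_{x\in N(w),\,wx\notin\mathbb{M}}\chi(x)$. A direct computation yields $\sigma(ww')=Z_{ww'}(S(w')-S(w))$, so stability of $ww'$ is a linear half-space event in $Z_{ww'}$. For the fixed edge $uv$, the uniqueness hypothesis ($u\not\sim v'$, $u'\not\sim v$, $u'\not\sim v'$) isolates the interaction between $uu'$ and $vv'$: the contribution of $Z_{vv'}$ to $S(u')-S(u)$ is exactly $-Z_{vv'}$ (coming solely from $v\in N(u)$), and symmetrically for $Z_{uu'}$ in $S(v')-S(v)$. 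Conditioning on all $Z_{ww'}$ with $ww'\notin\{uu',vv'\}$ and writing $a=S_0(u')-S_0(u)$, $b=S_0(v')-S_0(v)$ for the resulting integer constants, a short case analysis over the four values of $(Z_{uu'},Z_{vv'})$ combined with the Round~2 rule gives
\[
\Pr\bigl(h'(u)\neq h'(v)\,\big|\,a,b\bigr) \;=\; \tfrac12 + \tfrac18\,\phi(a,b),
\]
where
\[
\phi(a,b) := \mathbb{1}(a\geq -1,\,b\leq 1) + \mathbb{1}(a\leq 1,\,b\geq -1) - \mathbb{1}(a\geq 1,\,b\geq 1) - \mathbb{1}(a\leq -1,\,b\leq -1).
\]
A quick tabulation shows $\phi\in\{-1,0,1,2\}$; the value $-1$ occurs only on the two same-sign extremes $\{|a|\geq 2,\,|b|\geq 2,\,ab>0\}$, while $\phi=2$ on most of the central region $\{|a|,|b|\leq 1\}$.

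What remains is to prove $\mathbb{E}[\phi(a,b)]\geq c\,(D_u^{-1/2}+D_v^{-1/2})$, where $D_u:=d(u)+d(u')$ and $D_v:=d(v)+d(v')$. The variables $a$ and $b$ are signed sums of independent $\pm 1$'s with coefficients in $\{-2,\dots,2\}$, and by the structural guarantee of step~1 they have effective variances of order $D_u$ and $D_v$ respectively. Standard anti-concentration (a local central limit theorem or a direct random-walk estimate) then yields $\Pr(a\in\{-1,0,1\})=\Omega(D_u^{-1/2})$ and the symmetric estimate for $b$. If $a$ and $b$ were exactly independent, expanding $\sum_{\alpha,\beta}\phi(\alpha,\beta)\Pr(a=\alpha)\Pr(b=\beta)$ and using the identities $\phi(\alpha,\beta)=\phi(-\alpha,-\beta)$ and the symmetry of $a,b$ around $0$ leaves the leading contribution $\mathbb{E}[\phi(a,b)]\sim \Pr(a\in\{-1,0,1\})+\Pr(b\in\{-1,0,1\})$, exactly the desired bound. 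The main obstacle, and the place where $C_6$-freeness is essential beyond $C_4$, is the residual dependence between $a$ and $b$: pairs $ww'\in\mathbb{M}$ whose sign enters both sums correspond to paths of length at most $3$ from $\{u,u'\}$ to $\{v,v'\}$, and the $\{C_4,C_6\}$-free condition forces the number of such pairs to be $O(1)$. Conditioning on this constant number of shared signs first reduces the analysis to genuinely independent copies of $a$ and $b$ whose marginal anti-concentration is unchanged up to constants, and the independent-case estimate then applies to complete the proof.
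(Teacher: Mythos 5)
Your overall architecture (two-round analysis, reduction to a conditional expectation $\mathbb{E}[\phi(a,b)]$, anti-concentration giving the $D^{-1/2}$ scale) is the same as the paper's, and your independent-case computation is essentially the paper's Lemma \ref{lem3.5}. But the step you dispose of in one sentence --- ``the number of shared signs is $O(1)$, condition on them, then the independent-case estimate applies'' --- is where the actual difficulty lives, and your structural claim there is false. A pair $zz'\in\mathbb{M}$ with $z\in N(v)$ and $z'\in N(u)$ contributes its sign to both $a$ and $b$; two such pairs would only force a $C_6$ if $zz'$ were an \emph{edge} of $G$, whereas pairs of $\mathbb{M}$ coming from the leftover set $W$ are non-edges, so the number $k_6$ of such shared pairs is unbounded (this is exactly why the paper carries $k_5,k_6$ as free parameters and sums $\binom{k_5}{e}\binom{k_6}{f}$ over all their sign outcomes). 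Consequently you cannot reduce to independence by conditioning on $O(1)$ signs: conditioning on the shared signs shifts the centers of $a$ and $b$ by correlated amounts $t_1,t_2$ that can be as large as $\pm k_6$, and for fixed shifts of the same sign the conditional expectation of $\phi$ is genuinely negative ($\phi=-1$ with probability close to $1$ when both $a$ and $b$ are pushed far in the same direction). The proof must show that the \emph{average} over the binomially distributed shifts is still $\Omega(D_u^{-1/2}+D_v^{-1/2})$, which requires exact cancellation identities between products of binomial tails; this is the content of the paper's Lemmas \ref{lembb-bb}--\ref{lembb-bb3} (the last of which needs a full appendix) together with the four-case analysis on $k_1+k_2+k_3-k_4$. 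None of that is present or replaceable by a generic LCLT argument in your sketch.

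A second, related gap is the construction of $\mathbb{M}$: pairing up $W$ arbitrarily is not enough. The paper builds an auxiliary graph $H$ on $W$ (joining two vertices of $W$ that share a neighbor in $V(M_1)$), takes a maximum matching $M_2$ of $H$ with $M_1$ chosen to maximize $|M_2|$, and only pairs the remainder randomly. This choice is what bounds the counts of the ``bad'' interaction types (Proposition \ref{spc}, e.g.\ $k_{23}\le 1$ and $k_{12}=k_{13}=k_{22}=0$) and confines $k_1+k_2+k_3-k_4$ to $\{-1,0,1,2\}$, without which the case analysis --- and the sign of the conditional expectation --- is not under control. Your ``key structural property'' (each pair meets $\Theta(d(v)+d(v'))$ other pairs) is true but is only what is needed for the anti-concentration scale, not for the positivity of $\mathbb{E}[\phi(a,b)]$.
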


 \subsection{Useful lemmas}
In this subsection, we list some useful lemmas. We begin with a lower bound on Max-Bisections of $C_4$-free graphs, which will be used in the proof of Theorem \ref{the1.1} for sparse case. 
 \begin{theorem}[Hou and Yan \cite{Hou2020}]\label{cn}
Every connected $C_4$-free graph $G$ with $n$ vertices, $m$ edges and  minimum degree at least 2 admits a bisection of size at least $m/2+(n-1)/4$. 
\end{theorem}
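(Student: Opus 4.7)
The plan is to carry out a randomized bisection on a quasi-perfect matching, combined with a deterministic refinement exploiting the $C_4$-free condition. Let $M$ be a maximum matching of $G$ and $W = V(G)\setminus V(M)$; by the maximality of $M$, $W$ is independent. Pair the vertices of $W$ arbitrarily to extend $M$ into a quasi-perfect matching $\mathbb{M}$. Orienting every pair $vv'\in\mathbb{M}$ independently and uniformly across the bisection yields an automatically balanced partition with expected crossing size
\[
|M| + \tfrac{1}{2}\bigl(m - |M|\bigr) \;=\; \tfrac{m}{2} + \tfrac{|M|}{2},
\]
since matching edges always cross while every other edge crosses with probability $\tfrac12$. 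Whenever $|M|\ge (n-1)/2$, the theorem follows from a standard averaging argument.

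If $G$ has larger matching deficiency, the plan is to supply the missing additive gain via a Gallai--Edmonds decomposition. Let $U\subseteq V(G)$ witness the deficiency, so that $G-U$ has several odd components. The $C_4$-free condition forces any two of these odd components to share at most one common neighbour in $U$, producing a rigid, tree-like attachment pattern; together with $\delta\ge 2$ and connectivity, each odd component must itself be a small, highly structured $C_4$-free graph with $\delta\ge 2$ internally. Inside each odd component one replaces the random orientation by a deterministic local bisection that beats the $m/2$ baseline by a constant, and the local gains are then amalgamated into a global bisection while maintaining overall balance.

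The main obstacle is the deficient case. The graph obtained by joining three vertex-disjoint triangles to a single extra central vertex ($n=10$, $m=12$, $|M|=4$) shows that the random bound $m/2+|M|/2=8$ is strictly below the required $m/2+(n-1)/4=8.25$, so a genuine deterministic improvement within each deficient component is unavoidable. Ensuring that these local improvements combine globally without double-counting the edges between $U$ and the odd components, and without breaking the bisection balance, is the technical heart of the argument.
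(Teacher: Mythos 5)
You should first be aware that the paper contains no proof of this statement: Theorem \ref{cn} is quoted from Hou and Yan \cite{Hou2020} and used as a black box in the sparse case of Theorem \ref{the1.1}. So your proposal must stand on its own as a reproof of the cited result, and the comparison below is against what a complete proof would require rather than against anything in this paper.

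The first half of your argument is correct: randomly orienting the pairs of a quasi-perfect matching built on a maximum matching $M$ gives a balanced partition of expected size $m/2+|M|/2$, which settles the case $|M|\ge (n-1)/2$, and your three-triangles-plus-apex example correctly shows that this alone cannot close the general case. The gap is that the second half, which is where the entire difficulty lives, is a plan whose key assertions are unjustified and in places false. The odd components of a Gallai--Edmonds decomposition of a $C_4$-free graph are factor-critical but need not be ``small'' (a long odd cycle is already a counterexample), so they cannot be treated as bounded-size gadgets; and the hypothesis $\delta\ge 2$ does not descend to $G-U$, since a vertex of an odd component may have almost all of its neighbours in $U$, so the claim that each component is ``internally'' a $C_4$-free graph with $\delta\ge 2$ is not available. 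More fundamentally, the target gain $(n-1)/4$ is linear in $n$, so a ``deterministic local bisection that beats the $m/2$ baseline by a constant'' in each odd component yields only a gain proportional to the number of components; to win you would need a gain of order $n_i$ from a component on $n_i$ vertices, which is a statement of the same strength as the theorem itself, now without the minimum degree hypothesis. Finally, the amalgamation step --- combining the local choices, accounting for the edges in $E_G(U,V\setminus U)$ without double counting, and keeping the two sides within one of each other --- is exactly the technical heart of all known proofs of bisection bounds of this type (they proceed via decompositions of $V(G)$ into vertex-disjoint paths and careful local switching in the spirit of Lee--Loh--Sudakov and Fan--Hou--Yu, not via matching theory), and you give no mechanism for it. As written, the argument proves the theorem only for graphs with a near-perfect matching, which is precisely the hypothesis this line of work is trying to remove.
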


Note that Algorithm \ref{ALGO:Two-stage-Random-Algorithm} is a random algorithm, and the stability of  pairs in some quasi-perfect matching can be expressed into the sum of some binomial coefficients.   For integers $N\ge r\ge 1$, let 
\[
B(N,r)=\frac{1}{2^{N}}\sum_{i=0}^{\lfloor r \rfloor}\binom{N}{i} \text{ and } \Phi(t_1,t_2)=B\left(s_{1}, \frac{s_{1}-t_{1}}{2}\right)B\left(s_{2}, \frac{s_{2}-t_{2}}{2}\right).
\]
Note that $B(N,r)$ represents the probability that at most $\lfloor r \rfloor$ of $N$ random coin flips are heads.  We need the following lemmas in the proof of Lemma \ref{th-main}.

\begin{lemma}[Lin and Zeng \cite{Lin2021}]\label{lem3.5}
Let $a, b, c, d$ be  positive integers such that $2c \geq a-1$ and  $2d \geq b-1$. Then
\[B(a,c)B(b,d)-B(a,c-1)B(b,d-1)=\Theta\left(\frac{1}{2^{a}}\binom{a}{c}+\frac{1}{2^{b}}\binom{b}{d}\right)\]
and
\[\frac{1}{2^{a}}\binom{a}{\lfloor\frac{a}{2}\rfloor}+\frac{1}{2^{b}}\binom{b}{\lfloor\frac{b}{2}\rfloor}=
\frac{1}{2^{a}}\binom{a}{\lfloor\frac{a+1}{2}\rfloor}+\frac{1}{2^{b}}\binom{b}{\lfloor\frac{b+1}{2}\rfloor}=
\Theta\left(\frac{1}{\sqrt{a+1}}+\frac{1}{\sqrt{b+1}}\right).\]
\end{lemma}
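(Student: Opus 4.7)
The lemma consists of two independent statements, which I will handle in sequence. The only ingredients needed are: the one-step telescoping identity
\[
B(N,r)-B(N,r-1)=\frac{1}{2^{N}}\binom{N}{r},
\]
which is immediate from the definition of $B(N,r)$; the symmetry $\binom{N}{k}=\binom{N}{N-k}$; and the standard Stirling estimate $\binom{N}{\lfloor N/2\rfloor}=\Theta(2^{N}/\sqrt{N+1})$.

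\emph{First identity.} The approach is a double telescoping of the left-hand side. Inserting $\pm B(a,c-1)B(b,d)$ and applying the one-step identity gives
\[
B(a,c)B(b,d)-B(a,c-1)B(b,d-1)=\frac{1}{2^{a}}\binom{a}{c}B(b,d)+B(a,c-1)\cdot\frac{1}{2^{b}}\binom{b}{d},
\]
and, symmetrically, inserting $\pm B(a,c)B(b,d-1)$ yields
\[
B(a,c)B(b,d)-B(a,c-1)B(b,d-1)=B(a,c)\cdot\frac{1}{2^{b}}\binom{b}{d}+\frac{1}{2^{a}}\binom{a}{c}B(b,d-1).
\]
The upper bound of the correct order is immediate from either form, since $B(\cdot,\cdot)\le 1$. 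For the matching lower bound, the hypothesis $2c\ge a-1$ forces $c$ to be at least the lower median of a $\mathrm{Bin}(a,1/2)$ variable, so by symmetry of that distribution $B(a,c)\ge 1/2$; likewise $B(b,d)\ge 1/2$. Using $B(b,d)\ge 1/2$ in the first decomposition gives $\mathrm{LHS}\ge\frac{1}{2}\cdot\frac{1}{2^{a}}\binom{a}{c}$, and using $B(a,c)\ge 1/2$ in the second gives $\mathrm{LHS}\ge\frac{1}{2}\cdot\frac{1}{2^{b}}\binom{b}{d}$. Averaging these two lower bounds yields $\mathrm{LHS}\ge\frac{1}{4}\bigl(\frac{1}{2^{a}}\binom{a}{c}+\frac{1}{2^{b}}\binom{b}{d}\bigr)$, which closes the $\Theta$-estimate.

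\emph{Second identity.} The first equality follows directly from the symmetry of binomial coefficients: since $n-\lfloor n/2\rfloor=\lceil n/2\rceil=\lfloor(n+1)/2\rfloor$ for every $n\ge 0$, we have $\binom{a}{\lfloor a/2\rfloor}=\binom{a}{\lfloor(a+1)/2\rfloor}$, and likewise for $b$. The $\Theta$-estimate then reduces to the single uniform bound $\frac{1}{2^{N}}\binom{N}{\lfloor N/2\rfloor}=\Theta(1/\sqrt{N+1})$ for $N\ge 0$, which is a routine Stirling computation applied separately to the $a$- and $b$-terms.

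\emph{Main obstacle.} Everything above is essentially mechanical once the two decompositions are written down; the one step requiring a little care is verifying $B(N,c)\ge 1/2$ under $2c\ge N-1$. When $N$ is odd this holds with equality by symmetry of $\mathrm{Bin}(N,1/2)$; when $N$ is even the hypothesis upgrades to $c\ge N/2$, and then $B(N,c)=\frac{1}{2}+\frac{1}{2}\cdot\frac{1}{2^{N}}\binom{N}{N/2}>\frac{1}{2}$. Handling this small parity split so that one absolute constant (here $1/4$) works uniformly for all $a,b$ is the only non-routine point.
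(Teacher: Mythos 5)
Your proof is correct. Note, however, that the paper itself gives no argument for this lemma: it is quoted verbatim from Lin and Zeng \cite{Lin2021}, so there is no in-paper proof to compare against. Your self-contained derivation — the two telescoping decompositions of $B(a,c)B(b,d)-B(a,c-1)B(b,d-1)$, the trivial bound $B(\cdot,\cdot)\le 1$ for the upper estimate, and the median bound $B(N,c)\ge 1/2$ under $2c\ge N-1$ (with the small parity split for even $N$, where $B(N,N/2)=\tfrac12+\tfrac{1}{2^{N+1}}\binom{N}{N/2}$) for the lower estimate with constant $1/4$ — is complete, and the second display reduces correctly via $\binom{N}{\lfloor N/2\rfloor}=\binom{N}{\lfloor (N+1)/2\rfloor}$ to the standard central-binomial asymptotic $\frac{1}{2^N}\binom{N}{\lfloor N/2\rfloor}=\Theta(1/\sqrt{N+1})$. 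The only cosmetic slip is writing $B(N,c)=\tfrac12+\tfrac12\cdot\tfrac{1}{2^N}\binom{N}{N/2}$ in the even case where you mean $B(N,c)\ge B(N,N/2)$ equal to that quantity; this does not affect the argument.
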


\begin{lemma}[Wu and Xiong \cite{Wu2024}]\label{lembb-bb}
Let 
\[
\Lambda:= \Phi(t,t)+\Phi(-t-2,-t-2)-\Phi(t+2,-t)-\Phi(-t,t+2). 
\]
$\mathrm{(i)}$ If  $t\ge 0$, then $\Lambda\geq \Phi(-t,-t)-\Phi(-t+2,-t+2).$

\noindent$\mathrm{(ii)}$ If  $t< 0$, then $\Lambda\geq  \Phi(t,t)-\Phi(t+2,t+2).$
\end{lemma}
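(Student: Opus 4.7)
The plan is to reduce the desired bound to a polynomial inequality in the six quantities $x_{j}=B_{1}(\alpha_{1}-j)$, $y_{j}=B_{2}(\alpha_{2}-j)$ for $j\in\{0,1,2\}$, where $\alpha_{i}=(s_{i}-t)/2$, and then to exploit the symmetry of the underlying symmetric binomial distribution. Using the identity $B(N,N-r)=1-B(N,r-1)$, which is an immediate consequence of $\sum_{i=0}^{N}\binom{N}{i}=2^{N}$, one first rewrites the four constituents of $\Lambda$ as
\[
\Phi(t,t)=x_{0}y_{0},\quad \Phi(-t-2,-t-2)=(1-x_{2})(1-y_{2}),
\]
\[
\Phi(t+2,-t)=x_{1}(1-y_{1}),\quad \Phi(-t,t+2)=(1-x_{1})y_{1},
\]
so that after simplification $\Lambda=x_{0}y_{0}+x_{2}y_{2}+2x_{1}y_{1}+1-x_{1}-x_{2}-y_{1}-y_{2}$. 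The same identity rewrites the two candidate right-hand sides as $\Phi(-t,-t)-\Phi(-t+2,-t+2)=(1-x_{1})(1-y_{1})-(1-x_{0})(1-y_{0})$ and $\Phi(t,t)-\Phi(t+2,t+2)=x_{0}y_{0}-x_{1}y_{1}$.

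The key step is the algebraic identity
\[
\Lambda-\mathrm{RHS}\;=\;\bigl(1-\phi(x_{j},y_{j})-\phi(x_{2},y_{2})\bigr)+\bigl(x_{1}y_{1}-x_{2}y_{2}\bigr),
\]
where $\phi(a,b):=a+b-2ab=\tfrac{1}{2}-2(a-\tfrac{1}{2})(b-\tfrac{1}{2})$, with $j=0$ in case (i) and $j=1$ in case (ii). Both versions are verified by direct substitution of the formulas above. The term $x_{1}y_{1}-x_{2}y_{2}\ge 0$ follows from the monotonicity $x_{1}\ge x_{2}$ and $y_{1}\ge y_{2}$ (since $B(N,\cdot)$ is nondecreasing). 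So it suffices to show $\phi(x_{j},y_{j})\le \tfrac{1}{2}$ and $\phi(x_{2},y_{2})\le \tfrac{1}{2}$, which by the closed form of $\phi$ amounts to proving $(x_{j}-\tfrac{1}{2})(y_{j}-\tfrac{1}{2})\ge 0$ and $(x_{2}-\tfrac{1}{2})(y_{2}-\tfrac{1}{2})\ge 0$.

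For the final step I would switch to a probabilistic interpretation. Let $X_{i}\sim\mathrm{Bin}(s_{i},1/2)$ and $Y_{i}=s_{i}-2X_{i}$, so $Y_{i}$ is symmetric around $0$ and $B_{i}(\alpha_{i}-j)=\Pr[Y_{i}\ge \tau_{i,j}]$ for some threshold $\tau_{i,j}\in\{t+2j,\,t+2j+1\}$ determined by the parity of $s_{i}-t$. Since the median of each $Y_{i}$ is $0$, the tail probability satisfies $\Pr[Y_{i}\ge \tau]\le \tfrac{1}{2}$ for $\tau>0$ and $\Pr[Y_{i}\ge \tau]\ge \tfrac{1}{2}$ for $\tau\le 0$. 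In case (i), $t\ge 0$ ensures $\tau_{1,0},\tau_{2,0}\ge 0$ and $\tau_{1,2},\tau_{2,2}\ge 4$, so each of the two required products has factors of like sign. In case (ii), a brief split on $t=-1$, $t=-2$, and $t\le -3$ places $\tau_{1,1},\tau_{2,1}$ on the same side of $0$, and an analogous split handles $\tau_{1,2},\tau_{2,2}$.

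The anticipated obstacle is the boundary situation where $s_{1}$ and $s_{2}$ have different parities and $t+2j$ lies at $-1$ or $0$: then one of $\tau_{1,j},\tau_{2,j}$ equals $0$ and the other equals $1$, and the tail probability against a variable whose attained values have the wrong parity collapses to exactly $\tfrac{1}{2}$ by the symmetry of $Y_{i}$. The corresponding factor $x_{j}-\tfrac{1}{2}$ or $y_{j}-\tfrac{1}{2}$ vanishes, so the product is still $\ge 0$; verifying this carefully across the various parity combinations is the most delicate part of the argument.
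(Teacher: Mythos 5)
First, a point of reference: the paper does not actually prove Lemma \ref{lembb-bb} --- it is quoted from Wu and Xiong \cite{Wu2024} --- so the closest in-paper comparison is the Appendix proof of Lemma \ref{lembb-bb3}. Your route is genuinely different from that one. The Appendix writes each factor $B(s_i,\cdot)$ as a base value plus a telescoping sum of consecutive binomial terms and compares products termwise via ratio estimates of binomial coefficients; you instead complement tails, reduce $\Lambda-\mathrm{RHS}$ to the exact identity $2(x_j-\tfrac12)(y_j-\tfrac12)+2(x_2-\tfrac12)(y_2-\tfrac12)+(x_1y_1-x_2y_2)$, and finish with monotonicity of $B(N,\cdot)$ and the median symmetry of $\mathrm{Bin}(s_i,1/2)$. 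I have checked both algebraic identities and the sign analysis of the thresholds $t+2j$ in cases (i) and (ii), including the boundary values $t=0$ and $t=-1$ where a factor may equal exactly $\tfrac12$; all of this is correct. It is a cleaner and more conceptual argument than the termwise one.

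There is, however, one concrete gap. The identity $B(N,N-r)=1-B(N,r-1)$ is valid only for integer $r$: since $B(N,r)$ is defined with $\lfloor r\rfloor$, the correct general form is $B(N,N-r)=1-B(N,\lceil r\rceil-1)$. Consequently the product formulas $\Phi(-t-2,-t-2)=(1-x_2)(1-y_2)$, $\Phi(t+2,-t)=x_1(1-y_1)$, etc., and with them your key identity for $\Lambda-\mathrm{RHS}$, hold only when $s_1\equiv t\equiv s_2\pmod 2$, i.e.\ when $\alpha_i=(s_i-t)/2$ is an integer. When $s_i-t$ is odd, each complemented factor acquires an extra single-binomial correction (effectively the index $j$ shifts by one on that coordinate), and the identity must be re-derived. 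You anticipated a parity issue, but only in the final sign check of the thresholds $\tau_{i,j}$ --- the place it actually bites is upstream, in the complementation step. Since the companion statements (Lemma \ref{lembb-bb2}(iii) and Lemma \ref{lembb-bb3}) are asserted ``for any integer $t$'' with no parity hypothesis on $s_1,s_2$, you should either add the hypothesis $s_i\equiv t\pmod 2$ (which is satisfied in every application in this paper) or carry out the bookkeeping for the odd-parity cases.
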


\begin{lemma}[Wu and Zhong \cite{Wu2025}]\label{lembb-bb2}
Let 
\[
\Lambda:= \Phi(t,t-2)+\Phi(-t,-t-2)-\Phi(t+2,-t+2)-\Phi(-t+2,t+2). 
\]
$\mathrm{(i)}$ If $t\ge 0$, then $\Lambda\geq \Phi(-t,-t)-\Phi(-t+2,-t+2).$ 

\noindent$\mathrm{(ii)}$ If $t< 0$, then $\Lambda\geq \Phi(t,t)-\Phi(t+2,t+2).$

\noindent$\mathrm{(iii)}$ For any integer $t$,
\begin{align*}
\Phi(t-2,t)+\Phi(-t-2,-t)-\Phi(t,-t)-\Phi(-t,t) \geq  0.
\end{align*}

\end{lemma}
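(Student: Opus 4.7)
The plan is to interpret $\Phi$ probabilistically and then verify the three inequalities through algebraic manipulation combined with the reflection symmetry of the binomial distribution.

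\textbf{Setup.} For $i\in\{1,2\}$, let $Y_i$ be the sum of $s_i$ independent Rademacher $\pm 1$ random variables, with $Y_1,Y_2$ independent. Writing $F_i(x):=\Pr[Y_i\ge x]=B(s_i,(s_i-x)/2)$, one has $\Phi(t_1,t_2)=F_1(t_1)F_2(t_2)$. The distributional symmetry $Y_i\overset{d}{=}-Y_i$ yields the reflection identity $F_i(-x)+F_i(x+2)=1$ whenever $x$ has the parity of $s_i$ (with parity-dependent variants otherwise).

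\textbf{Step 1 (Reduction by $t\mapsto -t$).} The quantity $\Lambda$ in (i)--(ii) is invariant under $t\mapsto -t$, as the four summands merely permute; the LHS of (iii) is likewise invariant. Thus (ii) follows from (i) by substituting $-t$ for $t$, and one may assume $t\ge 0$ in the proofs of both (i) and (iii).

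\textbf{Step 2 (Proof of (iii)).} Grouping terms by the common $F_2$-factor, for $t\ge 0$ one obtains
\begin{align*}
& \Phi(t-2,t)+\Phi(-t-2,-t)-\Phi(t,-t)-\Phi(-t,t) \\
&\quad = F_2(-t)\Pr[-t-2\le Y_1\le t-1] - F_2(t)\Pr[-t\le Y_1\le t-3].
\end{align*}
Since $[-t,t-3]\subseteq[-t-2,t-1]$, this equals
\[
\bigl[F_2(-t)-F_2(t)\bigr]\Pr[-t\le Y_1\le t-3] + F_2(-t)\Pr\bigl[Y_1\in\{-t-2,-t-1,t-2,t-1\}\bigr],
\]
which is nonnegative because $F_2(-t)\ge F_2(t)$ for $t\ge 0$.

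\textbf{Step 3 (Proof of (i)).} For $t\ge 0$, set $p_k:=F_1(t+2k)$ and $r_k:=F_2(t+2k)$. Using the reflection identities to eliminate arguments $\le 0$ (so that $F_1(-t+2k)=1-F_1(t+2(1-k))$ and analogously for $F_2$), the inequality $\Lambda\ge\Phi(-t,-t)-\Phi(-t+2,-t+2)$ becomes, after expansion, the polynomial inequality
\[
p_0\bigl(r_{-1}+r_0+r_1\bigr)+p_1\bigl(r_0-r_1+r_2\bigr)+1 \;\ge\; p_0+p_1+r_0+r_2.
\]
Beyond the monotonicity constraints $1\ge p_0\ge p_1\ge 0$ and $1\ge r_{-1}\ge r_0\ge r_1\ge r_2\ge 0$, the reflection identity at $x=t$ yields $p_0+F_1(-t+2)=1$, and since $F_1(-t+2)\ge F_1(t+2)=p_1$ for $t\ge 0$, one obtains the crucial \emph{structural} constraints $p_0+p_1\le 1$ and, analogously, $r_0+r_1\le 1$. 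The plan is then to verify the polynomial inequality term-by-term, grouping it as a nonnegative combination of the atomic probabilities $p_0-p_1=\Pr[Y_1=t]$ and $r_{k-1}-r_k=\Pr[Y_2=t+2(k-1)]$, using the structural constraints to absorb the negative linear terms $-p_0-p_1-r_0-r_2$ into the positive quadratic term $p_0(r_{-1}+r_0+r_1)$ together with the constant $1$.

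\textbf{Main obstacle.} The delicate part is Step 3. The monotonicity alone does not force the polynomial to be nonnegative; one genuinely needs the shape constraints $p_0+p_1\le 1$ and $r_0+r_1\le 1$ coming from the reflection symmetry, together with the unimodality of binomial atoms. A cleaner (but elusive) alternative would be a combinatorial injection between the $\pm 1$-configurations in $\{-1,+1\}^{s_1}\times\{-1,+1\}^{s_2}$ witnessing the minus-sign events in $\Lambda$ and those witnessing the plus-sign events plus $\Phi(-t+2,-t+2)$; producing such an injection while respecting the parity classes of $Y_1$ and $Y_2$ would both streamline the proof and reveal the underlying structure, but constructing it explicitly is the main technical challenge.
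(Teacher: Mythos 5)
Your Step 1 (the $t\mapsto-t$ invariance of $\Lambda$, so that (ii) follows from (i)) and your Step 2 (part (iii)) are sound: grouping by the common $F_2$-factor and the inclusion $[-t,t-3]\subseteq[-t-2,t-1]$ do give (iii) for $t\ge 0$ (at $t=0$ your displayed identity undercounts the left side by a factor $2$ because the two $F_2$-factors coincide, but the sign conclusion is unaffected). The genuine gap is part (i), i.e.\ your Step 3, and you flag it yourself: you reduce (i) to the polynomial inequality $p_0(r_{-1}+r_0+r_1)+p_1(r_0-r_1+r_2)+1\ge p_0+p_1+r_0+r_2$ but never verify it, and the ``combinatorial injection'' alternative is only a wish. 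Two concrete points. First, the reduction is only valid when $t\equiv s_1$ and $t\equiv s_2 \pmod 2$: otherwise the reflection becomes $F_i(-x)=1-F_i(x+1)$, off from your identity by the atom $\Pr[Y_i=x+1]$, and since the lemma is applied in the paper with no parity relation between $t_i$ and $s_i$ (the floors in $B(N,r)$ are essential), the three mismatched-parity combinations lead to differently shaped inequalities in shifted variables that your plan does not address at all. Second, for what it is worth, the matching-parity inequality you wrote down \emph{is} provable from exactly the constraints you isolated (monotonicity plus $p_0+p_1\le1$, $r_0+r_1\le1$; no unimodality is needed): the quantity $Q:=1-p_0-p_1-r_0-r_2+p_0(r_{-1}+r_0+r_1)+p_1(r_0-r_1+r_2)$ is nonincreasing in $p_1$, so one may take $p_1=\min(p_0,1-p_0)$; if $p_0>\tfrac12$ this gives $Q\ge p_0(r_{-1}-r_2)+(2p_0-1)r_1\ge0$, while if $p_0\le\tfrac12$ minimizing in $r_{-1},r_2,r_1$ gives $Q\ge(1-2p_0)(1-2r_0)$ or $Q\ge p_0(2r_0-1)$ according as $r_0\le\tfrac12$ or not. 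So the plan can be completed in that case, but as submitted the core inequality (i) (hence (ii)) is unproven and the remaining parity cases are missing.

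For comparison with the paper: the paper does not prove this lemma at all — it is imported from Wu and Zhong \cite{Wu2025} — so there is no in-text proof to match; the nearest in-house model is the Appendix proof of Lemma \ref{lembb-bb3}, which writes each $\Phi$ as a common tail plus binomial atoms ($\alpha_i,\beta_i$) and runs a case analysis over the parities of $s_1,s_2$. A complete write-up of your Step 3 would have to do something of that kind, covering all parity combinations, rather than relying on the clean reflection identity alone.
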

\begin{lemma}\label{lembb-bb3}
For any integer $t$,
\begin{align*}
&\Phi(t-2,t+2)+\Phi(t+2,t-2)+\Phi(-t-4,-t)+\Phi(-t,-t-4)\\
-&\Phi(t,-t-2)-\Phi(-t+2,t+4)-\Phi(-t-2,t)-\Phi(t+4,-t+2) \geq 0.
\end{align*}
\end{lemma}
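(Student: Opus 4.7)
The plan is to reinterpret $\Phi$ probabilistically and reduce the inequality to a pointwise statement about a signed indicator function. Let $X\sim\mathrm{Bin}(s_1,1/2)$ and $Y\sim\mathrm{Bin}(s_2,1/2)$ be independent, and set $X':=s_1-2X$ and $Y':=s_2-2Y$; both are symmetric about $0$ and satisfy $B\!\bigl(s_i,(s_i-u)/2\bigr)=\mathbb{P}(X'\ge u)$, so $\Phi(t_1,t_2)=\mathbb{P}(X'\ge t_1,\ Y'\ge t_2)$. Writing $R_1,\ldots,R_8\subseteq\mathbb{Z}^2$ for the quadrants $\{x\ge t_1,\ y\ge t_2\}$ corresponding in order to the eight summands of $\Lambda$, one has
\[
\Lambda=\mathbb{E}\bigl[c(X',Y')\bigr],\qquad c(x,y):=\sum_{i=1}^{4}\mathbf{1}_{R_i}(x,y)-\sum_{i=5}^{8}\mathbf{1}_{R_i}(x,y).
\]
A direct check shows that $t\mapsto -t-2$ permutes $(R_1,R_3)$, $(R_2,R_4)$, $(R_5,R_7)$, $(R_6,R_8)$ within their sign classes and so leaves $\Lambda$ invariant, reducing to $t\ge -1$; the swap $s_1\leftrightarrow s_2$ yields a further $(x,y)$-reflection symmetry.

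For $t\ge 0$ I plan to prove the stronger pointwise bound $c(x,y)\ge 0$, relying on the containment chains $R_8\subseteq R_5\subseteq R_4$ and $R_6\subseteq R_7\subseteq R_3$, which follow at once from the explicit thresholds. The outer inclusions absorb two negative indicators via $\mathbf{1}_{R_4}-\mathbf{1}_{R_5}\ge 0$ and $\mathbf{1}_{R_3}-\mathbf{1}_{R_7}\ge 0$, so it suffices to show
\[
\mathbf{1}_{R_1}+\mathbf{1}_{R_2}+(\mathbf{1}_{R_3}-\mathbf{1}_{R_7})+(\mathbf{1}_{R_4}-\mathbf{1}_{R_5})\;\ge\;\mathbf{1}_{R_6}+\mathbf{1}_{R_8}.
\]
For $t\in\{0,1,2\}$ one also has $R_6\subseteq R_1$ and $R_8\subseteq R_2$, making the inequality immediate. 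For $t\ge 3$ I would argue by cases: when $\mathbf{1}_{R_6}=1$, either $x\ge t-2$ so $\mathbf{1}_{R_1}=1$, or else $x<t-2<t$ forces $\mathbf{1}_{R_5}=0$ while $\mathbf{1}_{R_4}=1$ still holds, so that $\mathbf{1}_{R_4}-\mathbf{1}_{R_5}$ supplies the needed $+1$. A symmetric argument handles $\mathbf{1}_{R_8}=1$; and if $\mathbf{1}_{R_6}=\mathbf{1}_{R_8}=1$ simultaneously, then $x,y\ge t+4$ forces $\mathbf{1}_{R_1}=\mathbf{1}_{R_2}=1$, contributing the two needed extras.

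The fixed point $t=-1$ is not pointwise nonnegative (indeed $c(-1,-1)=-2$), but the eight summands coalesce into four distinct ones, leaving $\Lambda=2\bigl[\Phi(-3,1)+\Phi(1,-3)-\Phi(-1,-1)-\Phi(3,3)\bigr]$. Since $t=-1$ forces $s_1,s_2$ odd, the symmetry of $X',Y'$ gives $\mathbb{P}(X'\ge 1)=\mathbb{P}(Y'\ge 1)=\tfrac12$. Setting $P:=\mathbb{P}(X'\ge 3)$ and $Q:=\mathbb{P}(Y'\ge 3)$, both in $[0,\tfrac12]$, and using $\mathbb{P}(X'\ge -k)=1-\mathbb{P}(X'\ge k+2)$ together with the monotonicity bound $\mathbb{P}(X'\ge 5)\le P$ (and the analog for $Y'$), a short expansion reduces $\Lambda/2$ to a quantity bounded below by $\tfrac12\bigl(P(1-2Q)+Q(1-2P)\bigr)\ge 0$. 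The main obstacle is the case analysis in the $t\ge 0$ step; the containment chains above are what shrink what would otherwise be a $9\times 9$ cell-by-cell tiling argument down to the handful of subcases indicated.
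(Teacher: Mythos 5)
Your route is genuinely different from the paper's. You recast $\Phi(t_1,t_2)$ as $\mathbb{P}(X'\ge t_1,\,Y'\ge t_2)$ for independent symmetrized binomials and reduce the inequality to pointwise nonnegativity of a signed sum of quadrant indicators; the paper instead writes every $\Phi$-term as $\bigl(f_1(t+4)+\alpha_i\bigr)\bigl(f_2(t+4)+\beta_j\bigr)$ with $\alpha_i,\beta_j$ partial sums of binomial probabilities, expands, and checks nonnegativity of the three coefficient sums $\Sigma_1,\Sigma_2,\Sigma_3$. I checked your $t\ge 0$ argument in detail: the involution $t\mapsto -t-2$ does permute the eight quadrants within their sign classes; the chains $R_8\subseteq R_5\subseteq R_4$ and $R_6\subseteq R_7\subseteq R_3$ hold for $t\ge 0$; and your case analysis (including the disjointness of the budgets compensating $\mathbf{1}_{R_6}$ and $\mathbf{1}_{R_8}$, and the simultaneous case $x,y\ge t+4$) is sound. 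This pointwise argument is cleaner and more conceptual than the paper's expansion for $t\ge 0$.

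There is, however, a gap at the fixed point $t=-1$: your assertion that ``$t=-1$ forces $s_1,s_2$ odd'' does not follow from the lemma as stated, which quantifies only over $t$ and places no parity (or size) condition on $s_1,s_2$. The assumption is not cosmetic: for $s_1=s_2=2$ and $t=-1$ one computes $\Lambda=2\bigl[\tfrac14+\tfrac14-\tfrac{9}{16}-0\bigr]=-\tfrac18<0$, so the displayed inequality genuinely needs a side condition at $t=-1$. (The paper's own appendix quietly assumes $s_1,s_2\ge 3$ there and then runs a four-way parity case analysis on $s_1,s_2$.) Your parity hypothesis $s_i\equiv t\pmod 2$ is in fact exactly what holds where the lemma is invoked in Case~3 of Section~5, and under it your $P,Q$ computation at $t=-1$ is correct, including the degenerate $s_i=1$ case; but it must be imported as an explicit hypothesis on $s_1,s_2$, not derived from $t=-1$. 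So: state the parity assumption (or reprove $t=-1$ for even $s_i\ge 4$ as the paper does), and the argument is complete.
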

 For the sake of coherence, we enclose the proof of Lemma \ref{lembb-bb3} in  Appendix.



\section{Proof of Theorem \ref{the1.1}}\label{SEC:pf-thm-1}
In this section, we prove Theorem \ref{the1.1}. Let $G$ be a connected $\{C_{4},C_{6}\}$-free graph with $n$ vertices, $m$ edges,  minimum degree at least 2 and degree sequence $d_{1}, d_{2},\ldots ,d_{n}$.  We may assume that $n$ is sufficiently large. Let $\xi=\min\{\frac{1}{32}, \frac{\epsilon}{2}\}$, where $\epsilon$ is given by Lemma \ref{th-main}. If $n\ge\frac{1}{8}  \sum_{i=1}^n \sqrt{d_i}+1$, then by Theorem \ref{cn}, $G$ has a bisection of size at least
\[ \frac{m}{2}+\frac{n-1}{4} \ge \frac{m}{2}+\xi \sum_{i=1}^n \sqrt{d_i},\]
and we are done.

Suppose $\sum_{i=1}^n \sqrt{d_i}>8(n-1)$. We may assume that $n$ is even, since otherwise, we can add a new vertex $x$ in $G$ and connect $x$ to two vertices in $G$ avoiding $C_4$ and $C_6$. Let $\mathbb{M}=\{e_1,\ldots,e_{n/2}\}$ be the quasi-perfect matching given by Lemma \ref{th-main} and $(V_{0},V_{1})$  be a random bisection  by Algorithm \ref{ALGO:Two-stage-Random-Algorithm}. To bound the size of $(V_{0},V_{1})$, it suffices to calculate the probability that each edge ends up $(V_{0},V_{1})$.

We claim that for each pair  $uu',vv'\in \mathbb{M}$, $e(uu',vv')\le 2$. 
Let $M$ be the maximum matching contained in $\mathbb{M}$. If both $uu'$ and $vv'$ are non-edges of $G$, then $e(uu',vv')=0$. If $uu',vv'\in E(G)$ and assume $uv\in E(G)$, then $u'v'\notin E(G)$ and at most one of $\{u'v,uv'\}$ belongs to $E(G)$ by the freeness of $C_4$. Otherwise, we may assume that $uu', uv\in E(G)$ and  $vv'\notin E(G)$. In this case, we also have $u'v'\notin E(G)$ and at most one of $\{u'v,uv'\}$ belongs to $E(G)$ by the maximum  of $M$.

For $1\le i<j\le n/2$, let $E_{ij}$ denote the set of edges between $e_i$ and $e_j$, and let $E_t$ be the union of  $E_{ij}$ with $|E_{ij}|=t$ for $t=1,2$. Then $E(G)=E_1\cup E_2\cup E(\mathbb{M})$, where $E(\mathbb{M})$ denotes the set of edges in $\mathbb{M}$. Note that for each $vv^{\prime}\in E(\mathbb{M})$, we have
\begin{equation}\label{edge-M}
\mathbb{P}\left(vv'\in E(V_0, V_1)\right)=1.
\end{equation}

Now we consider edges in $E_1$.  For each $uv\in E_1$, by Lemma \ref{th-main},  the probability that $u v$ lies in  $\left(V_0, V_1\right)$ is at least
$1 / 2+\epsilon \nabla_{u v}$, where
$$
\nabla_{uv}:=\frac{1}{\sqrt{d_G(u)+d_G\left(u^{\prime}\right)}}+\frac{1}{\sqrt{d_G(v)+d_G\left(v^{\prime}\right)}}.
$$
By the linearity of expectation, the expected number of the edges in $E_1$ contained in $\left(V_0, V_1\right)$ is at least
\begin{equation}\label{edge-e2}
\frac{\left|E_1\right| } 2+\epsilon \sum_{u v \in E_1} \nabla_{u v}.
\end{equation}

At last we consider edges in $E_2$. Suppose that $f,f'$ be two edges in some $E_{ij}$. Then $f$ and $f'$ should share a endpoint (See  Figure \ref{four-quasi-triangle}), and we call the subgraph induced by the vertices in $f$ and $f'$ a \emph{quasi-triangle}. Regardless of the values of  $h,h'$,
exactly one edge in $\{f,f'\}$ will fall into $\left(V_0, V_1\right)$. Thus,
\begin{equation}\label{edge-e1}
\mathbb{P}\left(|E(V_0,V_1)\cap E_2|=|E_2|/2\right)=1.
\end{equation}

\captionsetup{justification=centering}

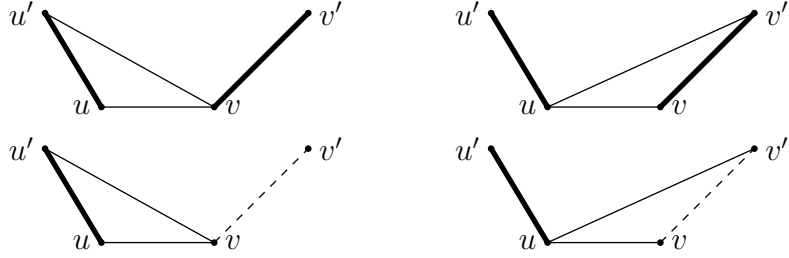
\begin{figure}[H]\label{four-quasi-triangle}
    \centering
    \begin{subfigure}[b]{0.24\textwidth}
        \centering
        \begin{tikzpicture}[scale=0.5,line width=0.5pt]
            \draw (-0.5,2.5) [fill=black] circle(0.07)node  [left]{$u'$};
            \draw (1,0) [fill=black] circle(0.07)node  [left] {$u$};
            \draw (4,0) [fill=black] circle(0.07)node  [right] {$v$};
            \draw (6.5,2.5) [fill=black] circle(0.07)node  [right] {$v'$};
            \draw [line width=2pt](-0.5,2.5) --(1,0);
            \draw[line width=0.5pt](1,0)--(4,0);
            \draw[line width=2pt](4,0)--(6.5,2.5);
            \draw[line width=0.5pt](-0.5,2.5)--(4,0);
        \end{tikzpicture}
    \end{subfigure}
    \hspace{2cm}
    \begin{subfigure}[b]{0.24\textwidth}
        \centering
        \begin{tikzpicture}[scale=0.5,line width=0.5pt]
            \draw (-0.5,2.5) [fill=black] circle(0.07)node  [left]{$u'$};
            \draw (1,0) [fill=black] circle(0.07)node  [left] {$u$};
            \draw (4,0) [fill=black] circle(0.07)node  [right] {$v$};
            \draw (6.5,2.5) [fill=black] circle(0.07)node  [right] {$v'$};
            \draw [line width=2pt](-0.5,2.5) --(1,0);
            \draw[line width=0.5pt](1,0)--(4,0);
            \draw[line width=2pt](4,0)--(6.5,2.5);
            \draw[line width=0.5pt](1,0)--(6.5,2.5);
        \end{tikzpicture}
    \end{subfigure}
    \hspace{2cm}
    \begin{subfigure}[b]{0.24\textwidth}
        \centering
        \begin{tikzpicture}[scale=0.5,line width=0.5pt]
            \draw (-0.5,2.5) [fill=black] circle(0.07)node  [left]{$u'$};
            \draw (1,0) [fill=black] circle(0.07)node  [left] {$u$};
            \draw (4,0) [fill=black] circle(0.07)node  [right] {$v$};
            \draw (6.5,2.5) [fill=black] circle(0.07)node  [right] {$v'$};
            \draw [line width=2pt](-0.5,2.5) --(1,0);
            \draw[line width=0.5pt](1,0)--(4,0);
            \draw[dashed](4,0)--(6.5,2.5);
            \draw[line width=0.5pt](-0.5,2.5)--(4,0);
        \end{tikzpicture}
    \end{subfigure}
    \hspace{2cm}
    \begin{subfigure}[b]{0.24\textwidth}
        \centering
        \begin{tikzpicture}[scale=0.5,line width=0.5pt]
            \draw (-0.5,2.5) [fill=black] circle(0.07)node  [left]{$u'$};
            \draw (1,0) [fill=black] circle(0.07)node  [left] {$u$};
            \draw (4,0) [fill=black] circle(0.07)node  [right] {$v$};
            \draw (6.5,2.5) [fill=black] circle(0.07)node  [right] {$v'$};
            \draw [line width=2pt](-0.5,2.5) --(1,0);
            \draw[line width=0.5pt](1,0)--(4,0);
            \draw[dashed](4,0)--(6.5,2.5);
            \draw[line width=0.5pt](1,0)--(6.5,2.5);
        \end{tikzpicture}
    \end{subfigure}
    \caption{Four kinds quasi-triangles}\label{four-quasi-triangle}
\end{figure}

Combining \eqref{edge-M}, \eqref{edge-e2} and \eqref{edge-e1},
\begin{align}\label{expaction-cut-main-thm}
\mathbb{E}\left(e\left(V_0, V_1\right)\right)
 \geq  |E(\mathbb{M})|+\frac{\left|E_1\right|}{2}+\frac{\left|E_2\right|}{2}+\epsilon \sum_{u v \in E_1} \nabla_{uv}
\geq \frac{m}{2}+\epsilon\sum_{u v \in E_1} \nabla_{u v}.
\end{align}

To complete the proof, we need bound $\sum_{u v \in E_1} \nabla_{u v}$.
By the  $C_4$-freeness of $G$, each pair in $\mathbb{M}$ is contained in at most one quasi-triangle, which  implies that
\begin{equation}\label{number-edge-E2}
|E_2|\le 2*n/2=n.
\end{equation}
For $e_i=uu'\in \mathbb{M}$, let $k_{i}$ denote the number of edges in $E_{2}$ which are incident with $u$ or $u'$. Then by \eqref{number-edge-E2},
\begin{equation}\label{sum-ki}
\sum_{i=1}^{n/2}k_i=2|E_2|\le 2n.
\end{equation}
Let $H$ be the spanning subgraph of $G$ with the edge set $E_1$. Then
\begin{equation*}
d_H(u)+d_H\left(u'\right)\ge d_G(u)+d_G\left(u^{\prime}\right)-k_i-2 \geq 0.
\end{equation*}
Using double counting, we have
$$
\sum_{u v \in E_1} \nabla_{u v}=\sum_{u u^{\prime} \in \mathbb{M}} \frac{d_H(u)+d_H\left(u^{\prime}\right)}{\sqrt{d_G(u)+d_G\left(u^{\prime}\right)}}.
$$
Thus,
\[\sum_{u v \in E_1} \nabla_{u v}\ge \sum_{u u^{\prime} \in \mathbb{M}} \frac{d_G(u)+d_G\left(u^{\prime}\right)-k_i-2}{\sqrt{d_G(u)+d_G\left(u^{\prime}\right)}}
\ge \frac{1}{\sqrt{2}}\sum_{i=1}^{n}\sqrt{d_i}-\sum_{i=1}^{n / 2} \sqrt{k_i+2}.
\]
where the  last inequality holds as $\sqrt{d_{G}(u)+d_{G}(u')} \geq \sqrt{d_{G}(u)/2}+\sqrt{d_{G}(u')/2}$. On the other hand, by \eqref{sum-ki} and the Cauchy-Schwarz inequality,
\[
\sum_{i=1}^{n / 2} \sqrt{k_i+2} \leq\left(\frac{n}{2} \sum_{i=1}^{n / 2}\left(k_i+2\right)\right)^{1 / 2}  <\sqrt{2} n.
\]
We conclude that
$$
\sum_{uv \in E_{1}}  \nabla_{u v} \geq\frac{1}{\sqrt{2}}\left(\sum_{i=1}^{n}\sqrt{d_i}-2n\right) > \frac{1}{2}\sum_{i=1}^{n}\sqrt{d_i},
$$
where the last inequality is from $\sum_{i=1}^{n}\sqrt{d_i}>8(n-1)$ and $n$ is large enough. This together with \eqref{expaction-cut-main-thm} yields that $G$ has a bisection of size at least $m/2+\xi \sum_{i=1}^n \sqrt{d_i}$.



\section{Proof of Theorem \ref{the1.2}}\label{SEC:pf-thm-2}
In this section, we give a proof of Theorem \ref{the1.2} using a standard degenerate argument  initially  proposed by  Alon \cite{Alo1996}. For  $k\ge 3$, let $G$ be a $\{C_4,C_6,C_{2k}\}$-free graph with $m$ edges and minimum degree at least 2. Define $D=bm^{\frac{1}{k+1}}$, in which $b=b(k)$ will be chosen later.
We claim $G$ is $(D-1)$-degenerate for some  $b$. Otherwise,  $G$ contains a subgraph $G'$ with minimum degree at least $D$. Note that the number of vertices of $G'$ is
\[N \leq \frac{2m}D= \frac{2m^{k/(k+1)}}{b}.\]
This implies that the number of edges of $G'$ is
\[e(G')\ge \frac{DN}{2}\ge \left(\frac{bN}{2}\right)^{1+1/k}.\]
The well-known  Bondy-Simonovits Theorem \cite{Bondy1974}, on the  maximum number of edges in $C_{2k}$-free graphs, yields that there is a constant $c =c(k)>0$ such that $e(G')<(cN)^{1+1/k}$. We arrive a contradiction by choosing $b>2c$.

Now we  label $v_{1}, v_{2},\cdots , v_{n}$ of the vertices of $G$ such that $d^{+}_{i}<D$ for every $i$, where $d^{+}_{i}$ denote the number of neighbours $v_{j}$ of $v_{i}$ with $j<i$.
Then
\[\sum_{i=1}^{n}\sqrt{d(v_{i})}\geq \sum_{i=1}^{n}\sqrt{d^{+}_{i}}>\frac{\sum_{i=1}^{n}d^{+}_{i}}{\sqrt{D}}=\frac{m}{\sqrt{D}}\geq \frac{m^{(2k+1)/(2k+2)}}{\sqrt{b}}.\]
Hence, by Theorem \ref{the1.1}, $G$ has a bisection of size at least
\[\frac{m}{2}+\xi \sum_{i=1}^{n}\sqrt{d(v_{i})} \geq \frac{m}{2}+ \frac{\xi}{\sqrt{b}}m^{(2k+1)/(2k+2)}. \]



\section{Proof of Lemma \ref{th-main}}\label{SEC:pf-main-lemma}

In this section, we prove Lemma \ref{th-main}. Let $G=(V, E)$ be a $\{C_{4},C_{6}\}$-free graph with  $n$ vertices and minimum degree at least two, where $n$ is even. First we define a quasi-perfect matching $\mathbb{M}$ of $G$. Let $M_1$ be a maximum matching of $G$. Clearly, $W = V(G)\setminus V(M_1)$ is an independent set. Now we construct a new graph $H$ with the vertex set $W$ and $x,y\in W$ are adjacent in $H$ iff there exists $z\in V(M_1)$ such that $xz,yz\in E(G)$. Note that if $z$ exists, then it is unique as $G$ is $C_4$-free. Let $M_2$ be the maximum matching of $H$. We may choice $M_1$ such that the size of $M_2$ is as large as possible. Then, we randomly pair the vertices in $W\setminus V(M_2)$ independently. Denote $M_3$ for the set of these pairs. Clearly,
\[
\mathbb{M}=M_1 \cup M_2 \cup M_3=\left\{e_{1}, \ldots, e_{n/2}\right\}
\]
is  a quasi-perfect matching of $G$. To simplify the presentation, we use $e_v=vv'$ to denote the pair in $\mathbb{M}$ containing  $v\in V(G)$.

Suppose that $h,h'$ are  random functions, $(V_0,V_1)$ is the output bisection of $G$ by Algorithm \ref{ALGO:Two-stage-Random-Algorithm} and $uv$ be the unique edge of $G$ connecting $e_u$ and $e_v$.  Call $P=u'uvv'$ be the \emph{special $\mathbb{M}$-path} associated with $uv$. We may assume that $uu'\in E(G)$.
Define
\[
p_{uv} = \mathbb{P}(e_{u}\text{\ and \ } e_{v} \text{\  are stable} | \: h(u)\neq h(v))
\]
and
\[
q_{uv} =\mathbb{P}(e_{u}\text{\ and \ }  e_{v} \text{\  are stable}| \: h(u) = h(v)).
\]
The following proposition, given by Lin and Zeng \cite{Lin2021},  shows the probability that $uv$ falls in $(V_0,V_1)$. We mention that in Lin and Zeng's proof, they also need $G$ has perfect matchings.  But their proof is still valid  for graphs without perfect matchings, see,  \emph{e.g.},  \cite{Rao2022}, Section 4.
\begin{proposition}[Lin and Zeng \cite{Lin2021}]\label{lem-h'}
\begin{equation*}
\mathbb{P}\left(uv\in E(V_0,V_1)\right)=\mathbb{P}\left(h'(u)\neq h'(v)\right)=\frac{1}{2}+\frac{1}{4}\left(p_{uv}-q_{uv}\right).
\end{equation*}
\end{proposition}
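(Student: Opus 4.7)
The plan is to condition on the stability status of the two pairs $e_u$ and $e_v$ after the first stage of Algorithm \ref{ALGO:Two-stage-Random-Algorithm}. Write $A$ for the event that $e_u$ is stable under the first-stage bipartition $(U_0,U_1)$ and $B$ for the corresponding event for $e_v$. The observation that drives the proof is structural: by construction of the algorithm, for a stable pair $h'$ agrees with $h$, while for an active pair the value of $h'$ comes from a fresh independent uniform choice between the two assignments, so the second-stage coin flips are jointly independent of $\sigma(h)$ and of one another.

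Using this, split the sample space into the four events $A\cap B$, $A\cap B^{c}$, $A^{c}\cap B$, $A^{c}\cap B^{c}$. On $A\cap B$ we have $h'(u)=h(u)$ and $h'(v)=h(v)$, so $\{h'(u)\neq h'(v)\}=\{h(u)\neq h(v)\}$. On each of the remaining three events, at least one of $h'(u),h'(v)$ is an independent fair coin, which forces the conditional probability of $\{h'(u)\neq h'(v)\}$ to equal $1/2$ by symmetry. Summing these contributions,
\[
\mathbb{P}\bigl(h'(u)\neq h'(v)\bigr)=\mathbb{P}\bigl(A\cap B\cap\{h(u)\neq h(v)\}\bigr)+\tfrac{1}{2}\bigl(1-\mathbb{P}(A\cap B)\bigr).
\]
Since the first-stage assignments on distinct pairs are independent and uniform, $\mathbb{P}(h(u)\neq h(v))=\mathbb{P}(h(u)=h(v))=1/2$. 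The definitions of $p_{uv}$ and $q_{uv}$ then give $\mathbb{P}(A\cap B\cap\{h(u)\neq h(v)\})=p_{uv}/2$ and, by the law of total probability, $\mathbb{P}(A\cap B)=(p_{uv}+q_{uv})/2$. Plugging in yields
\[
\mathbb{P}\bigl(h'(u)\neq h'(v)\bigr)=\tfrac{p_{uv}}{2}+\tfrac{1}{2}\Bigl(1-\tfrac{p_{uv}+q_{uv}}{2}\Bigr)=\tfrac{1}{2}+\tfrac{p_{uv}-q_{uv}}{4},
\]
which is exactly the claimed identity; the equality $\mathbb{P}(uv\in E(V_0,V_1))=\mathbb{P}(h'(u)\neq h'(v))$ is immediate from the definition $V_i=(h')^{-1}(i)$.

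The only delicate point is verifying that the argument in each case really does yield conditional probability $1/2$. This requires two facts: (i) the events $A$ and $B$ are measurable with respect to $\sigma(h)$ (they are defined from first-stage data only, through the quantity $\sigma(\cdot)$), so that conditioning on them is clean; and (ii) the second-stage re-randomizations for $e_u$ and for $e_v$ use fresh coins that are independent of $\sigma(h)$ and of each other. Once these measurability and independence facts are spelled out, the four-case analysis above is routine. I would expect this measurability/independence bookkeeping, rather than any computation, to be the main point one should be careful about in writing up the argument.
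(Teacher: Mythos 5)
Your argument is correct: conditioning on the first-stage stability events for $e_u$ and $e_v$ (which are $\sigma(h)$-measurable), using that the second-stage coins are fresh and independent so every case with at least one active pair contributes probability exactly $\tfrac12$, and then expressing $\mathbb{P}(A\cap B\cap\{h(u)\neq h(v)\})=\tfrac{p_{uv}}{2}$ and $\mathbb{P}(A\cap B)=\tfrac{p_{uv}+q_{uv}}{2}$ gives precisely the claimed identity. This is essentially the same conditioning argument as in the source the paper cites for this proposition (Lin and Zeng), which the paper itself does not reproduce, so no further comparison is needed.
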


In order to prove Lemma \ref{th-main},  the main task is to bound $p_{uv}-q_{uv}$ carefully. For $i,j \in \{0,1\}$, let
\[P_{ij}=\mathbb{P}(e_{u}\text{\ and \ }  e_{v} \text{\  are stable}| \:h(u)=i, h(v)=j).\]
It follows that
\[p_{uv}-q_{uv}=P_{01}+P_{10}-P_{00}-P_{11}.\]
Thus, it suffices to calculate $P_{ij}$ for every $i,j \in \{0,1\}$. By the definition of stability, it is easy to see that  $P_{01}=P_{10}$ and $P_{00}=P_{11}$.

Note that the stability of $e_{u}$ (or $e_{v}$) is determined by the $h$-values of the vertices in $N(u)\cup N(u')$ (or $N(v) \cup N(v')$). We can narrow this domain further. For $x,y\in V(G)$, let $N(x,y)$ denote the set of  common neighbors of $x$ and $y$ in $G$, and  let $S(x)$ be the vertex set obtained from $N(x)$ by deleting $x'$ for $xx'\in \mathbb{M}$, vertices in $N(x,x')$, $y$ and $y'$ with $yy' \in \mathbb{M}$. Clearly,  deleting vertices contribute nothing to $\sigma (xx')$. Thus,

\begin{proposition}\label{pro3.1}
For each $e\in \mathbb{M}$,
\[\sigma(e)=\sum_{x \in e}\left(|N_{1-h(x)}(x)\cap S(x)|-|N_{h(x)}(x)\cap S(x)|\right).\]
\end{proposition}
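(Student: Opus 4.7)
The plan is to unwind
\[
\sigma(e_u) \;=\; \sum_{x \in \{u,u'\}} \bigl(|N_{1-h(x)}(x)| - |N_{h(x)}(x)|\bigr)
\]
directly from the definition in \eqref{Def:sigma(vv')-stable} and to verify that each class of vertices removed in passing from $N(x)$ to $S(x)$ contributes nothing to this expression; once this is checked, $|N_i(x)|$ may be replaced by $|N_i(x) \cap S(x)|$ without changing the sum, which is exactly the claimed identity. This merely formalizes the informal remark made just before the proposition.

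I would dispose of the three removed classes one at a time. First, $x'$ never lies in $N_i(x)$ by the definition of $N_i(x)$, which excludes $\mathbb{M}$-partners of $x$; its removal is therefore free. Second, for a common neighbour $z \in N(u,u')$, the first stage of Algorithm~\ref{ALGO:Two-stage-Random-Algorithm} forces $h(u) \ne h(u')$, so the single value $h(z)$ makes $z$ contribute $+1$ to one of the two difference terms (indexed by $x = u$ and $x = u'$) and $-1$ to the other, giving zero total. Third, for a pair $y,y' \in N(x)$ with $yy' \in \mathbb{M}$, the same stage produces $h(y) \ne h(y')$, and because $\mathbb{M}$ is a matching containing $xx'$ we also have $yx,\, y'x \notin \mathbb{M}$; hence $y$ and $y'$ lie in the two different sets $N_0(x)$ and $N_1(x)$, and their contributions to $|N_{1-h(x)}(x)| - |N_{h(x)}(x)|$ cancel within the single $x$-summand. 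The only book-keeping subtlety is that the three classes may overlap (for instance the unique element of $N(u,u')$, if it exists by $C_4$-freeness, could also be a member of one of the pair-type sets), but in every case the cancellation I just described is local to the vertex, so no double-counting hazard arises.

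There is no real technical obstacle here; the proposition is an exact identity that follows from three one-line cancellations. The one point to keep in mind is that $\sigma$ is evaluated on the output $h$ of the first stage of Algorithm~\ref{ALGO:Two-stage-Random-Algorithm} (not on the final $h'$), since it is precisely the first stage that guarantees $h(w) \ne h(w')$ for every $ww' \in \mathbb{M}$, and this antisymmetry on $\mathbb{M}$-pairs is what drives each of the three cancellations above.
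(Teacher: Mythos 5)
Your three individual cancellations are correct and are exactly the argument the paper compresses into the single sentence ``deleting vertices contribute nothing to $\sigma(xx')$'': the partner $x'$ is already excluded from $N_i(x)$ by definition; a common neighbour of $x$ and $x'$ contributes opposite signs to the two summands because stage one of Algorithm \ref{ALGO:Two-stage-Random-Algorithm} forces $h(x)\ne h(x')$; and an $\mathbb{M}$-pair contained in $N(x)$ contributes opposite signs within the single $x$-summand because $h(y)\ne h(y')$. You are also right that $\sigma$ is evaluated on $h$ rather than $h'$.

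The gap is in your final sentence: the overlap you flag is not harmless, and ``the cancellation is local to the vertex'' is not a valid reason to dismiss it. Suppose $z\in N(u,u')$ and its $\mathbb{M}$-partner $z^*$ also lies in $N(u)$ (or symmetrically in $N(u')$), so that $z$ is deleted both as a common neighbour and as half of a pair. Passing from $N$ to $S$ then removes exactly three signed contributions: $z$ and $z^*$ from the $u$-summand and $z$ from the $u'$-summand. Writing $c_x(w)=+1$ if $h(w)=1-h(x)$ and $-1$ otherwise, these equal $c_u(z)$, $c_u(z^*)=-c_u(z)$ and $c_{u'}(z)=-c_u(z)$, whose sum is $-c_u(z)=\pm1\neq 0$, so the claimed identity would be off by one. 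The proposition survives only because this configuration is impossible, and ruling it out needs input your argument never uses: if $uu',zz^*\in M_1$ then $u\,z^*\,z\,u'$ is a $C_4$; if $uu'\in M_1$ and $z,z^*\in W$ then $z^*\,u\,u'\,z$ is an $M_1$-augmenting path; and if $uu'\in M_2\cup M_3$ then $u,u'\in W$ forces $zz^*\in M_1$ and $u\,z^*\,z\,u'$ is again an augmenting path --- in each case contradicting $C_4$-freeness or the maximality of $M_1$. You need to add this exclusion step; as written, the ``no double-counting hazard'' claim is false.
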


The crux of calculating the probability that $uv$ fall in $(V_0,V_1)$ lies in the fact that the stability of $e_{u}$ and $e_{v}$ may not be independent. For example, the stability of $e_{u}$ and $e_{v}$ will influence each other if there exist vertices  $x\in N(u)$ and  $x' \in N(v')$ with $xx' \in \mathbb{M}$. Here, we list all the possible cases where the stability of $e_{u}$ and $e_{v}$ is not independent  (see Figure \ref{seven-type-case}).
\begin{enumerate}[itemindent=1em]
\item[($S_1$)] vertices $x\in N(u)$ and $x' \in N(v')$ such that $xx' \in \mathbb{M}$;
\item[($S_2$)]  vertices $y \in N(v)$ and  $y'  \in N(u')$ such that $yy' \in \mathbb{M}$;
\item[($S_3$)] the vertex $p \in N(u,v)$;
\item[($S_4$)] the vertex $q \in N(u',v')$;
\item[($S_5$)] the vertex $r \in N(u,v')$;
\item[($S_6$)] vertices $z \in N(v)$ and  $z'  \in N(u)$ such that $zz' \in \mathbb{M}$;
\item[($S_7$)]  $u$ and $v$.
\end{enumerate}

\begin{minipage}[b]{0.9\textwidth}\label{seven-type-case}
\begin{center}
\begin{tikzpicture}[scale=1,line width=0.8pt]

\draw (3.5,0) [fill=black] circle(0.07)node  [below] {$x\rule{0pt}{9pt}$};
\draw (5,0) [fill=black] circle(0.07)node  [below] {$x'$};
\draw [line width=2pt, color=green] (1,0.5)--(2.5,0.5);
\draw [line width=2pt, color=red](3.5,0)--(5,0);

\draw (1,0.5) [fill=black] circle(0.07)node  [below] {$y_1'$};
\draw (2.5,0.5) [fill=black] circle(0.07)node  [below] {$y_1\rule{0pt}{9pt}$};
\draw (1,-0.5) [fill=black] circle(0.07)node  [below] {$y_2'$};
\draw (2.5,-0.5) [fill=black] circle(0.07)node  [below] {$y_2\rule{0pt}{9pt}$};
\draw [line width=2pt, dashed, color=blue](1,-0.5) --(2.5,-0.5);

\draw (-0.5,2.5) [fill=black] circle(0.07)node  [left]{$u'$};
\draw (2,2) [fill=black] circle(0.07)node  [above] {$u$};
\draw (4,2) [fill=black] circle(0.07)node  [above] {$v$};
\draw (6.5,2.5) [fill=black] circle(0.07)node  [right] {$v'$};
\draw [line width=2pt](-0.5,2.5) --(2,2);
\draw[line width=0.5pt](2,2)--(4,2);
\draw[line width=2pt, dashed](4,2)--(6.5,2.5);

\draw [line width=0.5pt](-0.5,2.5)--(1,0.5);
\draw  [line width=0.5pt](2,2)--(3.5,0);
\draw [line width=0.5pt](4,2)--(2.5,0.5);
\draw [line width=0.5pt](6.5,2.5)--(5,0);

\draw [line width=0.5pt](-0.5,2.5)--(1,-0.5);
\draw [line width=0.5pt](4,2)--(2.5,-0.5);

\draw (3,3) [fill=black] circle(0.07)node  [above] {$p$};
\draw (3,5) [fill=black] circle(0.07)node  [above] {$q$};
\draw (3,4) [fill=black] circle(0.07)node  [above] {$r$};

\draw [line width=0.5pt](2,2)--(3,3)--(4,2);
\draw [line width=0.5pt](-0.5,2.5)--(3,5)--(6.5,2.5);
\draw [line width=0.5pt](2,2)--(3,4)--(6.5,2.5);

\draw (2,1.1) [fill=black] circle(0.07)node  [below] {$z'$};
\draw (4,1.1) [fill=black] circle(0.07)node  [below] {$z\rule{0pt}{9pt}$};
\draw [line width=2pt, dashed, color=yellow] (2,1.1) -- (4,1.1);
\draw[line width=0.5pt](2,1.1)--(2,2);
\draw[line width=0.5pt](4,1.1)--(4,2);

\draw[line width=0.3pt](0,-1.3) rectangle (6,5.7);
\end{tikzpicture}
\captionsetup{font=footnotesize}

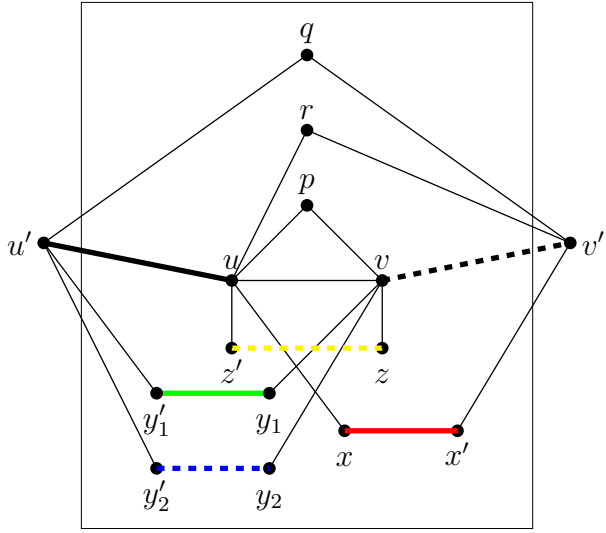
\captionof{figure}{Vertices in $S(P)$}\label{seven-type-case}
\end{center}
\end{minipage}

The labels of vertices from different types ($S_1$)-($S_7$) are pairwise independent. For $P=u'uvv'$, let $S(P)$ be the  set of vertices of types ($S_1$)-($S_7$). Clearly, vertices in $S(P)$ will affect the stability of $uu'$ and $vv'$. We illustrate this influence is small by considering the number of vertices in $S(P)$. 
For $i\in [6]$, let $k_{i}$ be the number  of vertices  (or pairs in ($S_1$), ($S_2$) and ($S_6$)) of type ($S_i$). Moreover, for $1\le i\le 2$, let $k_{ij}$ be the number of vertex pairs in $M_j$ of type ($S_i$).
Clearly, $k_1=k_{11}+k_{12}+k_{13}$ and $k_2=k_{21}+k_{22}+k_{23}.$

We are going to bound  $k_i$ or $k_{ij}$, which is vital in our subsequent analysis. Recall that $G$ contains neither $C_4$ nor $C_6$.
Then we have
\[k_{11},k_{21},k_3,k_4,k_5\in \{0,1\}.\]
It is readily verifiable that  $k_{22}=0$. Using the choice of $\mathbb{M}$, we emphasize that $k_{23}\le 1$. Otherwise, suppose that $y_1,y_2\in N(v)$, $y_1',y_2'\in N(u')$ and $y_1y_1',y_2y_2'\in M_3$. This means that $y_1y_2, y_1'y_2'\in E(H)$, contradicts to the fact that $M_2$ is a maximum matching of $H$. Recall that $uu'\in M_1$. If $vv'\in M_2\cup M_3$, then $k_{12}+k_{13}=0$. Otherwise, $k_5=(k_{12}+k_{13}) \times (k_{22}+k_{23})=0$ by the choice of $\mathbb{M}$ and $C_4$-freeness. In this case, we may assume that $k_{12}+k_{13}=0$ by symmetry.

We  summarize these points as follows. Note that  (III) and (IV) are not hard to verify by the choice of $\mathbb{M}$ and $\{C_4,C_6\}$-freeness of $G$, and (V) can be deduced by (I)-(III). 
\begin{proposition}\label{spc}
The following statements hold.
\begin{enumerate}[itemindent=1em]
\item[(I)] $k_{12}=k_{13}=k_{22}=0$;
\item[(II)] $k_1=k_{11},k_{21},k_{23},k_3,k_4,k_5\in \{0,1\}$;
\item[(III)] $k_{11} \times k_{23}=k_{11} \times k_4=k_{21} \times k_3=0$;
\item[(IV)]  If $vv'\in M_1$, then $k_{3} \times k_4=k_{21} \times k_4=0$.
\item[(V)]  $-1 \leq k_{1}+k_{2}+k_{3}-k_{4}\leq 2.$
\end{enumerate}
\end{proposition}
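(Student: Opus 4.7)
The plan is to establish (I)-(V) by combining three ingredients: the independence of $W := V(G) \setminus V(M_1)$ (which follows from the maximality of $M_1$), $\{C_4,C_6\}$-freeness, and the extremality of $M_2$ under the optimal choice of $M_1$. Throughout, I will repeatedly use two workhorse principles: if $a,b \in W \setminus V(M_2)$ and $ab \in E(H)$, then $M_2 \cup \{ab\}$ contradicts the maximality of $|M_2|$; and if $M_1'$ is any maximum matching in $G$ with associated graph $H'$, then the maximum matching in $H'$ has size at most $|M_2|$. In every sub-case of (I) where the considered pair in $\mathbb{M}$ belongs to $M_2$ or $M_3$, independence of $W$ immediately forces $vv' \in M_1$.

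To prove (I), for $k_{22}=0$ and $k_{12}=0$ I pick the common-neighbor witness $z \in V(M_1)$ of the purported $M_2$-pair and split on the position of $z$: if $z \notin V(P)$, the path $P$ together with $z$ closes to a $C_6$; each case $z \in \{u, u', v, v'\}$ either produces a forbidden $C_4$ or $C_6$, or yields an $M_1$-augmenting path (for example, $y\,u\,u'\,y'$ of length $3$ when $z=u$ in the $k_{22}$ case), contradicting maximality of $M_1$. For $k_{13}=0$: if $xx' \in M_3$ with $x \in N(u),\ x' \in N(v')$, I first note that $xx' \notin E(H)$ (else $M_2$ is not maximum), which forces $u \not\sim x'$ and $v' \not\sim x$. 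Now replace $M_1$ by $M_1' := M_1 - \{uu', vv'\} + \{ux, v'x'\}$; then $u \in V(M_1')$ witnesses $u'v \in E(H')$, and after checking (using the above non-adjacencies and $C_4$-freeness) that no $M_2$-edge had $u'$ or $v$ as its unique common-neighbor witness, the set $M_2 \cup \{u'v\}$ is a matching in $H'$ of size $|M_2|+1$, contradicting the choice of $M_1$.

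For (II), the bounds $k_3, k_4, k_5 \le 1$ are immediate from $C_4$-freeness. For $k_{11} \le 1$, two $M_1$-pairs $x_1 x_1', x_2 x_2'$ of type $S_1$ produce the $C_6$ $u\,x_1\,x_1'\,v'\,x_2'\,x_2$; $k_{21} \le 1$ is symmetric. For $k_{23} \le 1$, two pairs $y_1 y_1', y_2 y_2' \in M_3$ of type $S_2$ witness $y_1 y_2 \in E(H)$ via the common neighbor $v$, with $y_1, y_2 \notin V(M_2)$, contradicting maximality of $M_2$; and $k_1=k_{11}$ is from (I). The vanishings in (III) and (IV) each follow from a short forbidden cycle: with $vv' \in M_1$, $k_3 k_4 = 0$ arises from the $C_6$ $p\,u\,u'\,q\,v'\,v$; $k_{21} k_4 = 0$ from the $C_6$ $y\,v\,v'\,q\,u'\,y'$ (closing via the $M_1$-edge $y'y$); $k_{11} k_4 = 0$ from the $C_6$ $u\,x\,x'\,v'\,q\,u'$; $k_{21} k_3 = 0$ from the $C_6$ $p\,u\,u'\,y'\,y\,v$; finally, $k_{11} k_{23} = 0$ follows by a matching-extension argument analogous to the one for $k_{13}=0$.

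Finally, (V) is a short case analysis from (I)-(III). The lower bound $-1$ is trivial since $k_4 \le 1$ and $k_1, k_2, k_3 \ge 0$. For the upper bound, if $k_1 = 1$ then (III) gives $k_{23} = k_4 = 0$, so $k_2 = k_{21} \le 1$, and $k_{21} k_3 = 0$ rules out $k_{21} = k_3 = 1$, yielding a sum of at most $2$; if $k_1 = 0$, then (II) gives $k_{23} \le 1$ and (III) gives $k_{21} k_3 = 0$, and the three remaining sub-cases each produce a sum of at most $2$. The main obstacle I expect is in the degenerate cases of (I), where the common-neighbor witness $z$ coincides with a vertex of $P$: there, the putative $C_6$ collapses to a triangle, and the contradiction must be extracted by exhibiting an $M_1$-augmenting path through $P$, or by a delicate $M_1$-modification paired with a bookkeeping argument showing that the associated matching in $H'$ strictly grows.
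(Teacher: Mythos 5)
Your proposal is correct in substance, and for the one genuinely delicate claim in (I) it takes a different --- in fact stronger --- route than the paper. The paper never proves $k_{13}=0$ outright: when $vv'\in M_1$ it only establishes $(k_{12}+k_{13})(k_{22}+k_{23})=0$ (e.g.\ via the length-$3$ augmenting path $x\,u\,u'\,y'$) and then takes $k_{12}+k_{13}=0$ \emph{without loss of generality}, using the symmetry that reversing $P=u'uvv'$ swaps types $(S_1)$ and $(S_2)$; so in the paper statement (I) is a normalization rather than an absolute fact. You instead prove it absolutely via the exchange $M_1'=M_1-\{uu',vv'\}+\{ux,v'x'\}$ followed by extending $M_2$ by $u'v$ in $H'$, contradicting the extremal choice of $M_1$. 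This does work, but your stated justification for the key verification --- that no $M_2$-edge has $u'$ or $v$ as its unique common-neighbour witness --- is misattributed: the non-adjacencies $u\not\sim x'$, $v'\not\sim x$ together with $C_4$-freeness do not yield it. The correct reason is your own first workhorse principle applied to $M_1'$ rather than $M_1$: since $M_1'$ is again a maximum matching, its exposed set $W'=(W\setminus\{x,x'\})\cup\{u',v\}$ is independent, so no vertex of $W\setminus\{x,x'\}\supseteq V(M_2)$ is adjacent to $u'$ or $v$, and hence no witness lies in $\{u',v\}$. With that repair your argument is complete (and, run symmetrically, it would even give $k_{23}=0$, showing that the paper's bound $k_{23}\le 1$ and the later sub-cases with $k_{23}=1$ are not tight). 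The remaining items --- the explicit $C_4$/$C_6$ and augmenting-path case analyses for (II)--(IV), including the degenerate positions of the witness $z$, and the arithmetic for (V) --- supply exactly the verifications the paper dismisses as ``readily verifiable''/``not hard to verify''; the only small caveat is that $k_{11}\times k_{23}=0$ is most cleanly obtained from the $M_1$-augmenting path $y'\,u'\,u\,x\,x'\,v'\,v\,y$ rather than from another $H'$-extension.
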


For each  $x \in V(P)$, define 
\[
T(x)=S(x)\cap S(P) \text{ and } T_{i}(x)=\{z \in T(x):h(z)=i\}
\]
for $i=0,1$. Note that $T(u) \cup T\left(u^{\prime}\right)$  and  $T(v) \cup T\left(v^{\prime}\right)$  are symmetry in the following sense: for each  $x \in T(u) \cup T\left(u^{\prime}\right)$, there is a  $x^{\prime} \in T(v) \cup T\left(v^{\prime}\right)$  such that  $x=x^{\prime}$  or  $xx^{\prime} \in \mathbb{M}$  or  $x=v$ and  $x^{\prime}=u$. For an integer $k\ge 0$ and $x\in S(P)$, define
\[
\{x\}_{k}=\begin{cases}
\emptyset & k=0 \\
x_1,\cdots,x_{k} & k\ge 1
\end{cases}.
\]
We may assume that  (see Figure \ref{seven-type-case})
\begin{equation*}
S(P)=\left\{u,v,\{x\}_{k_1},\{x'\}_{k_{1}},\{y\}_{k_{2}},\{y'\}_{k_{2}},\{p\}_{k_{3}},\{q\}_{k_{4}},\{r\}_{k_{5}},\{z\}_{k_6},\{z'\}_{k_6}\right\}.
\end{equation*}
Consequently,
\[T(u)=\{v,\{x\}_{k_{1}},\{p\}_{k_{3}},\{r\}_{k_{5}},\{z'\}_{k_6}\},\quad T(v)=\{u,\{y\}_{k_{2}},\{p\}_{k_{3}},\{z\}_{k_6}\}\]
and
\[T(u')=\{\{y'\}_{k_{2}},\{q\}_{k_{4}}\},\quad T(v')=\{\{x'\}_{k_{1}},\{q\}_{k_{4}},\{r\}_{k_{5}}\}.\]
Clearly, 
\begin{equation}\label{Lem:t(u)+t(u')}
|T(u)|+|T(u')|=|T(v)|+|T(v')|=\sum_{i=1}^6k_i+1.
\end{equation}
The basic barrier of Lin and Zeng's method  \cite{Lin2021} in here is $\sum_{i=1}^6k_i$ maybe large. The idea of the following proof treats those uncontrollable $k_i$  as variables. Suppose that there are exactly $a$ vertices in $\{\{x\}_{k_{1}}\}$ with $h$-value 0, $b$ vertices in $\{\{y\}_{k_{2}}\}$ with $h$-value 0, $c$ vertices in $\{\{p\}_{k_{3}}\}$ with $h$-value 0, $d$ vertices in $\{\{q\}_{k_{4}}\}$ with $h$-value 0, $e$ vertices in $\{\{r\}_{k_{5}}\}$ with $h$-value 0, and $f$ vertices in $\{\{z\}_{k_{6}}\}$ with $h$-value 0 (see Table \ref{TAB:h-value-SP}).

\begin{table}[H]
\centering
\caption{Labels of vertices in $S(P)$}\label{TAB:h-value-SP}
\begin{tabular}{|p{0.5cm}<{\centering}|p{1cm}<{\centering}p{1cm}<{\centering}|p{1cm}<{\centering}p{1cm}<{\centering}|p{1cm}<{\centering}|p{1cm}<{\centering}|p{1cm}<{\centering}|p{1cm}<{\centering}p{1cm}<{\centering}|}\hline
  &$\{x\}_{k_{1}}$ &$\{x'\}_{k_{1}}$ &$\{y\}_{k_{2}}$ &$\{y'\}_{k_{2}}$ &$\{p\}_{k_{3}}$  &$\{q\}_{k_{4}}$  &$\{r\}_{k_{5}}$ &$\{z\}_{k_6}$ &$\{z'\}_{k_6}$\\ \hline
0&$a$ &$k_{1}-a$ &$b$ &$k_{2}-b$ &$c$  &$d$  &$e$  &$f$  &$ k_6-f$              \\ \hline
1&$k_{1}-a$ &$a$ &$k_{2}-b$ &$b$  &$k_{3}-c$  &$k_{4}-d$ &$k_{5}-e$  &$ k_6-f$ &$f$  \\ \hline
\end{tabular}
\end{table}

For $xx'\in \{uu', vv'\}$, we calculate the probability that $xx'$ is stable under the condition $h(u)=i,h(v)=j$. Suppose that there are exactly $\alpha +|T_{h(x)}(x)|$ vertices in $S(x)$ whose $h$-value is equal to $h(x)$, and $\beta +|T_{h(y)}(x')|$ vertices in $S(x')$ whose $h$-value is equal to $h(x')$. By \eqref{Def:sigma(vv')-stable} and Proposition \ref{pro3.1}, $xx'$ is stable iff
 \[(\alpha +|T_{h(x)}(x)|)+(\beta +|T_{h(x')}(x')|) \leq \frac{|S(x)|+|S(x')|}{2},\]
which equals
 \begin{align*}
 &\alpha + \beta \notag \\
 \leq & \frac{|S(x)|-|T(x)|+|S(x')|-|T(x')|}{2}-\left(|T_{h(x)}(x)|+|T_{h(x')}(x')|-\frac{|T(x)|+|T(x')|}{2}\right) \notag \\
 =& \frac{s(x,x')-t(x,x')}{2},
 \end{align*}
where
 \[s(x,x'):=|S(x)|-|T(x)|+|S(x')|-|T(x')|,\]
 and
 \[t(x,x'):=| T_{h(x)}(x)|+|T_{h(x')}(x')|-|T_{1-h(x)}(x)|-|T_{1-h(x')}(x')|.\]
We remark that 
\begin{equation}\label{label-d(x)+d(x')}
  d(x)+d(x')\ge |S(x)|+|S(x')|=s(x,x')+\sum_{i=1}^6 k_i.
\end{equation}

Observe  that $s(x,x')$ is a constant and $t(x,x')$ is a random variable for fixed $P$. For each $y \in V(P)$ and $z \in T(y)\backslash \{u,v\}$,  $\mathbb{P}(z \in T_{h(y)}(y))= \mathbb{P}(z\in T_{1-h(y)}(y))=1/2$. Let $\mathcal{T}_{xx'}^{ij}$ be the (multi-)set of all possible values of $t(x,x')$ under the condition  $h(u)=i$ and $h(v)=j$.
Then we have 
\begin{align*}
  &\mathbb{P}(xx' \text{\  is stable}| \; h(u)=i,h(v)=j)\\
  = &\mathbb{P}\left( \alpha + \beta\le \frac{s(x,x')-t(x,x')}{2} \Big|\; h(u)=i,h(v)=j\right)\\
    =&\frac{1}{2^{|S(x)|+|S(x')|-1}}\sum_{t(x,x') \in \mathcal{T}_{xx'}^{ij}}\sum_{\gamma=0}^{\lfloor \frac{s(x,x')-t(x,x')}{2}\rfloor}\sum_{\alpha + \beta
  =\gamma}\binom{|S(x)|-|T(x)|}{\alpha}\binom{|S(x')|-|T(x')|}{\beta} \\
  =&\frac{1}{2^{|T(x)|+|T(x')|-1}}\sum_{t(x,x') \in \mathcal{T}_{xx'}^{ij}}\frac{1}{2^{s(x,x')}}\sum_{\gamma =0}^{\lfloor \frac{s(x,x')-t(x,x')}{2}\rfloor}\binom{s(x,x')}{\gamma}.
\end{align*}
Here we employ  the Vandermonde's convolution formula: for all positive integers $m_{1}, m_{2}$  and $n$, 
\[\sum_{k=0}^{n} \binom{m_{1}}{k} \binom{m_{2}}{n-k}=\binom{m_{1}+m_{2}}{n}.\]

Similarly,  we can determine $P_{ij}$.  Due to the symmetry  of $S(P)$,  once $h$-values of  vertices in either $T(u)\cup T(u')$ or $T(v)\cup T(v')$ are fixed, then  $h$-values of the other part are determined. For example, if there exists $yy' \in \mathbb{M}$ such that  $y \in T(u) \cup T(u')$ and $y' \in T(v)\cup T(v')$, then  $h(y)=0$ (or $h(y)=1$) when $h(y')=1$ (or $h(y')=0$).
This implies that one of  $t(u, u')$ and $t(v,v')$ is determined by the other.
For simplicity, define 
\[
t_{1}=t(u,u'), t_{2}=t(v,v'), s_{1}=s(u,u') \text{ and } s_{2}=s(v,v'). 
\]
Recall that $B(N,r)=\frac{1}{2^{N}}\sum_{i=0}^{\lfloor r \rfloor}\binom{N}{i}$  and $\Phi(t_1,t_2)=B\left(s_{1}, \frac{s_{1}-t_{1}}{2}\right)B\left(s_{2}, \frac{s_{2}-t_{2}}{2}\right)$, we have 
\begin{align}\label{main-Pij}
P_{ij}&= \mathbb{P}(e_{u}\text{\ and \ }  e_{v} \text{\  are stable}| \: h(u)=i, h(v)=j)\notag\\
&=\frac{1}{2^{|T(u)|+|T(u')|-1}}\sum_{t_{1} \in \mathcal{T}_{uu'}^{ij}} \frac{1}{2^{s_{1}}} \sum_{\gamma_1 =0}^{\lfloor \frac{s_{1}-t_{1}}{2}\rfloor} \binom{s_{1}}{\gamma_1}\frac{1}{2^{s_{2}}} \sum_{\gamma_2=0}^{\lfloor \frac{s_{2}-t_{2}}{2}\rfloor} \binom{s_{2}}{\gamma_2}\notag\\
&=\frac{1}{2^{|T(u)|+|T(u')|-1}}\sum_{t_{1}\in \mathcal{T}_{u{u'}}^{ij}}B\left(s_{1}, \frac{s_{1}-t_{1}}{2}\right)B\left(s_{2}, \frac{s_{2}-t_{2}}{2}\right)\notag \\
&=\frac{1}{2^{\sum_{i=1}^6k_i}}\sum_{t_{1}\in \mathcal{T}_{u{u'}}^{ij}}\Phi(t_1,t_2), 
\end{align}
where the last equality follows from \eqref{Lem:t(u)+t(u')}. 

To complete the proof, we have to calculate $P_{01}$ and $P_{00}$, respectively. For $i,j \in \{0,1\}$, let $\mathcal{L}_{ij}$ be  the event that $h(u)=i$ and $h(v)=j$. If $\mathcal{L}_{01}$ happens, then  $h(u')=1$ and $h(v')=0$, which together with Table \ref{TAB:h-value-SP} yields that 
\begin{align}
t_{1}&=|T_{0}(u)|-|T_{1}(u)|+|T_{1}(u')|-|T_{0}(u')| \notag\\
     &=-1+a-(k_{1}-a)+b-(k_{2}-b)+c-(k_{3}-c) \notag\\
      &\ \ \ \ \ \ +(k_{4}-d)-d+e-(k_{5}-e)+(k_6-f)-f\notag\\
     &=2a+2b+2c-2d+2e-2f-k_{1}-k_{2}-k_{3}+k_{4}-k_{5}+k_6-1\label{t1}, 
\end{align}
and
\begin{align}
t_{2}&=|T_{1}(v)|-|T_{0}(v)|+|T_{0}(v')|-|T_{1}(v')| \notag\\
     &=-1+(k_{1}-a)-a+(k_{2}-b)-b+(k_{3}-c)-c \notag\\
     &\ \ \ \ \ \ +d-(k_{4}-d)+e-(k_{5}-e)+(k_6-f)-f\notag\\
     &=-2a-2b-2c+2d+2e-2f+k_{1}+k_{2}+k_{3}-k_{4}-k_{5}+k_6-1\label{t2}.
\end{align}
Similarly, if $\mathcal{L}_{00}$ happens, then 
\begin{align}
t'_{1}&=|T_{0}(u)|-|T_{1}(u)|+|T_{1}(u')|-|T_{0}(u')| \notag\\
      &=2a+2b+2c-2d+2e-2f-k_{1}-k_{2}-k_{3}+k_{4}-k_{5}+k_6+1\label{t'1}
\end{align}
and
\begin{align}
t'_{2}&=|T_{0}(v)|-|T_{1}(v)|+|T_{1}(v')|-|T_{0}(v')|\notag \\
      &=2a+2b+2c-2d-2e+2f-k_{1}-k_{2}-k_{3}+k_{4}+k_{5}-k_6+1\label{t'2}.
\end{align}
Combining \eqref{t1}--\eqref{t'2}, we have 
\begin{proposition}\label{t-t}
\[t_{2}=t_{1}-4(a+b+c-d)+2(k_{1}+k_{2}+k_{3}-k_{4}),\]
\[t_{1}'=t_{1}+2 \text{ and   } t_{2}'=-t_{2}.\]
\end{proposition}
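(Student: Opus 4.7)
The plan is to verify each of the three equalities by aligning the explicit linear expressions \eqref{t1}--\eqref{t'2} for $t_1,t_2,t_1',t_2'$ as linear forms in the nine parameters $a,b,c,d,e,f,k_1,\ldots,k_6$, together with the constant $\pm 1$ contributions coming from the vertices $u,v$ of the special $\mathbb{M}$-path $P$. All four formulas are already on the page, so the proof reduces to three term-by-term checks.

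First, I would subtract \eqref{t1} from \eqref{t2}. The coefficients of $e,f,k_5,k_6$ and the trailing $-1$ appear with the \emph{same} sign in both right-hand sides and cancel, while the coefficients of $a,b,c,d,k_1,k_2,k_3,k_4$ are exactly negated between the two lines and therefore double up. Collecting what remains yields $-4a-4b-4c+4d+2k_1+2k_2+2k_3-2k_4$, which is precisely $-4(a+b+c-d)+2(k_1+k_2+k_3-k_4)$, giving the first identity.

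Second, the relation $t_1'=t_1+2$ is immediate by comparing \eqref{t'1} with \eqref{t1}: the two expressions agree term by term except for the trailing constants $+1$ and $-1$. Conceptually the shift is explained by the fact that $v\in T(u)$ while $v\notin T(u')$, so under event $\mathcal{L}_{01}$ the vertex $v$ lies in $T_1(u)$ and contributes $-1$ to $|T_0(u)|-|T_1(u)|$, whereas under $\mathcal{L}_{00}$ it lies in $T_0(u)$ and contributes $+1$; every other summand in the definition of $t_1$ is unaffected by the choice of $h(v)$.

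Third, $t_2'=-t_2$ can either be checked algebraically (negating \eqref{t2} reproduces the right-hand side of \eqref{t'2} coefficient by coefficient) or read off directly from the definition $t(v,v')=|T_{h(v)}(v)|+|T_{h(v')}(v')|-|T_{1-h(v)}(v)|-|T_{1-h(v')}(v')|$, since passing from $\mathcal{L}_{01}$ to $\mathcal{L}_{00}$ flips both $h(v)$ and $h(v')$ simultaneously and hence swaps the positive and negative summands. There is essentially no obstacle here; the only bookkeeping subtlety is to remember that the $\pm 1$ constants in \eqref{t1}--\eqref{t'2} come from the presence of $v\in T(u)$ and $u\in T(v)$ (not from the edge $uv$ itself), which is exactly why the first identity carries no constant shift while the second does.
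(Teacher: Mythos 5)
Your proposal is correct and proceeds exactly as the paper does: the paper's proof of Proposition \ref{t-t} is simply the term-by-term combination of the explicit expressions \eqref{t1}--\eqref{t'2}, which is what you carry out (your added conceptual remarks about the $\pm 1$ contributions of $v\in T(u)$ and $u\in T(v)$ are consistent with the derivation and do not change the route).
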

By Proposition \ref{spc} (V), we have 
\[-1 \leq k_{1}+k_{2}+k_{3}-k_{4}\leq 2.\]
We divide our proof  into four cases according to $k_{1}+k_{2}+k_{3}-k_{4}$.  For convenience, let's write 
\[
\alpha = a+b+c-d.
\]

\noindent {\bf Case 1.}  $k_{1}+k_{2}+k_{3}-k_{4}=-1$.  \\

By Proposition \ref{spc}, we obtain $k_{1}=k_{2}=k_{3}=0$ and $k_4=1$. Consequently, $a=b=c=0$ and $\sum_{i=1}^6k_i=k_5+k_6+1$.
Let
\[t=2e-2f-k_5+k_6.\]
By Proposition \ref{t-t}, we get the  following table.
\begin{table}[H]
\centering
\caption{The values of $t_{1}$ and $t_{2}$}
\begin{tabular}{|p{1.5cm}<{\centering}|p{0.7cm}<{\centering}|p{2.35cm}<{\centering}p{2.35cm}<{\centering}|p{2.35cm}<{\centering}p{2.7cm}<{\centering}|}\hline
                   &         &\multicolumn{2}{c|}{$\mathcal{L}_{01}$}  &\multicolumn{2}{c|}{$\mathcal{L}_{00}$ }  \\ \hline
     $d$    &$\alpha$        &$t_{1}$           &$t_{2}$               &$t'_{1}$            &$t'_{2}$ \\ \hline

         0          &0    &$ t$         &$t-2$              &$t+2$         &$-t+2$ \\ \hline
         1          &-1   &$t-2$        &$t$               &$ t$           &$-t$  \\ \hline\end{tabular}
\end{table}

Note that $\mathcal{T}_{uu'}^{ij}$ is determined by $h$-values of vertices in $S(P)$ by its definition, which means that  we need to enumerate all possible $h$-values of vertices in $S(P)$ to calculate $P_{ij}$. As there are no vertices of type $(S_1)$--$(S_3)$ and exactly one vertex $q$ of type $(S_4)$, we can divide \eqref{main-Pij} into two term according the $h$-value of $q$ and have the following 
\begin{align}
P_{01}=P_{10}=&\frac{1}{2^{k_5+k_6+1}}\sum_{e=0}^{k_{5}}\sum_{f=0}^{k_6}\binom{k_{5}}{e}\binom{k_6}{f}\bigg[\Phi(t,t-2)+\Phi(t-2,t)\bigg] \label{Case1:P01}\\
=&\frac{1}{2^{k_5+k_6+1}}\sum_{e=0}^{k_{5}}\sum_{f=0}^{k_6}\binom{k_{5}}{k_5-e}\binom{k_6}{k_6-f}\bigg[\Phi(-t,-t-2)+\Phi(-t-2,-t)\bigg]\notag\\
=&\frac{1}{2^{k_5+k_6+1}}\sum_{e=0}^{k_{5}}\sum_{f=0}^{k_6}\binom{k_{5}}{e}\binom{k_6}{f}\bigg[\Phi(-t,-t-2)+\Phi(-t-2,-t)\bigg]\label{Case1:P01-refine}.
\end{align}
Note that the third equality holds by replacing $e$ with $k_5-e$ and $f$ with $k_6-f$. 
Similarly,
\begin{align}
P_{00}=P_{11}=&\frac{1}{2^{k_5+k_6+1}}\sum_{e=0}^{k_{5}}\sum_{f=0}^{k_6}\binom{k_{5}}{e}\binom{k_6}{f}\bigg[\Phi(t+2,-t+2)+\Phi(t,-t)\bigg]\label{Case1:P00}\\
=&\frac{1}{2^{k_5+k_6+1}}\sum_{e=0}^{k_{5}}\sum_{f=0}^{k_6}\binom{k_{5}}{e}\binom{k_6}{f}\bigg[\Phi(-t+2,t+2)+\Phi(-t,t)\bigg]\label{Case1:P00-refine}.
\end{align}
Combining \eqref{Case1:P01}, \eqref{Case1:P01-refine}, \eqref{Case1:P00}, \eqref{Case1:P00-refine}, we conclude that 
\begin{align}\label{Case1:puv-quv}
&p_{uv}-q_{uv}=2P_{01}-2P_{00}\notag\\
=&\frac{1}{2^{k_5+k_6+1}}\sum_{e=0}^{k_{5}}\sum_{f=0}^{k_6}\binom{k_{5}}{e}\binom{k_6}{f}\bigg[\Phi(t,t-2)+\Phi(t-2,t) +\Phi(-t,-t-2)+\Phi(-t-2,-t)\bigg]\notag\\
     -&\frac{1}{2^{k_5+k_6+1}}\sum_{e=0}^{k_{5}}\sum_{f=0}^{k_6}\binom{k_{5}}{e}\binom{k_6}{f}\bigg[\Phi(t+2,-t+2)+\Phi(t,-t)+\Phi(-t+2,t+2)+\Phi(-t,t)\bigg]\notag\\
\geq&
     \frac{1}{2^{k_5+k_6+1}}\sum_{e=0}^{k_{5}}\sum_{f=0}^{k_6}\binom{k_{5}}{e}\binom{k_6}{f}\cdot\Lambda, 
\end{align}
where 
\[
\Lambda:=\Phi(t,t-2)+\Phi(-t,-t-2)-\Phi(t+2,-t+2)-\Phi(-t+2,t+2), 
\]
and the last inequality holds by Lemma \ref{lembb-bb2} (iii).

Now we deal with $\Lambda$. If $t\geq 0$, by Lemmas  \ref{lem3.5} and \ref{lembb-bb2} (i),
\begin{align*}
\Lambda \geq & \Phi(-t,-t)-\Phi(-t+2,-t+2)\\
= & B\left(s_1,\frac{s_1+t}2\right)B\left(s_2,\frac{s_2+t}2\right)-B\left(s_1,\frac{s_1+t}2-1\right)B\left(s_2,\frac{s_2+t}2-1\right)
\\
= &\Theta\left(\frac{1}{2^{s_{1}}}\binom{s_{1}}{\frac{s_{1}+t}{2}}+\frac{1}{2^{s_{2}}}\binom{s_{2}}{\frac{s_{2}+t}{2}}\right)
= \Theta\left(\frac{1}{2^{s_{1}}}\binom{s_{1}}{\frac{s_{1}-t}{2}}+\frac{1}{2^{s_{2}}}\binom{s_{2}}{\frac{s_{2}-t}{2}}\right).
\end{align*}
Otherwise,  by Lemmas \ref{lem3.5} and \ref{lembb-bb2} (ii), we still have 
\begin{align*}
\Lambda\geq & \Phi(t,t)-\Phi(t+2,t+2)\\
= & B\left(s_1,\frac{s_1-t}2\right)B\left(s_2,\frac{s_2-t}2\right)-B\left(s_1,\frac{s_1-t}2-1\right)B\left(s_2,\frac{s_2-t}2-1\right)
\\
= &\Theta\left(\frac{1}{2^{s_{1}}}\binom{s_{1}}{\frac{s_{1}-t}{2}}+\frac{1}{2^{s_{2}}}\binom{s_{2}}{\frac{s_{2}-t}{2}}\right).
\end{align*}
In both cases, we have 
\begin{align}\label{Case1:bound-lambda}
\Lambda \ge \Theta\left(\frac{1}{2^{s_{1}}}\binom{s_{1}}{\frac{s_{1}-t}{2}}+\frac{1}{2^{s_{2}}}\binom{s_{2}}{\frac{s_{2}-t}{2}}\right).
\end{align}

Substituting the first term of the right hand in  \eqref{Case1:bound-lambda} and $t=2e-2f-k_5+k_6$ into \eqref{Case1:puv-quv} yields that 
\begin{align}\label{Case1:first-term}
  &\frac{1}{2^{k_5+k_6+1}}\sum_{e=0}^{k_{5}}\sum_{f=0}^{k_6}\binom{k_{5}}{e}\binom{k_6}{f}\left(\frac{1}{2^{s_{1}}}\binom{s_{1}}{\frac{s_{1}-t}{2}}\right)\notag\\
=&\frac{1}{2^{s_{1}+k_5+k_6+1}} \sum_{e=0}^{k_{5}}\sum_{f=0}^{k_{6}}\binom{k_{5}}{e}\binom{k_{6}}{k_6-f}\binom{s_{1}}{\frac{s_{1}+k_{5}-k_6}{2}-e+f}\notag\\ 
=&\frac{1}{2^{s_{1}+k_5+k_6+1}} \binom{s_{1}+k_5+k_6}{\frac{s_{1}+k_5+k_6}{2}}\notag\\
=&\Theta\left(\frac{1}{\sqrt{s_{1}+k_5+k_6+1}}\right)\geq \Theta\left(\frac{1}{\sqrt{d(u)+d(u')}}\right),
\end{align}
where the last inequality follows from \eqref{label-d(x)+d(x')}. Similarly, we can get 
\begin{align}\label{Case1:second-term}
\frac{1}{2^{k_5+k_6+1}}\sum_{e=0}^{k_{5}}\sum_{f=0}^{k_6}\binom{k_{5}}{e}\binom{k_6}{f}\left(\frac{1}{2^{s_{2}}}\binom{s_{2}}{\frac{s_{2}-t}{2}}\right)\geq \Theta\left(\frac{1}{\sqrt{d(v)+d(v')}}\right).
\end{align}
Combining \eqref{Case1:puv-quv}, \eqref{Case1:first-term}, \eqref{Case1:second-term}, we have 
\begin{align}\label{Lem:main-equaltiy-puv-q}
p_{uv}-q_{uv} =  \Theta\left(\frac{1}{\sqrt{d(u)+d(u')}}+\frac{1}{\sqrt{d(v)+d(v')}}\right).
\end{align}



\noindent {\bf Case 2.}  $k_{1}+k_{2}+k_{3}-k_{4}=1$.\\

We claim  $k_4=0$. Otherwise, $k_4=1$ and  then $k_1=0$ by Proposition \ref{spc} (I) and (III). This implies
\[k_1+k_{2}+k_{3}=k_{21}+k_{23}+k_{3}=2.\]
Using  Proposition \ref{spc} (II) and (III) yields $k_{21}+k_{3}\le 1$ and then 
$k_{23}=1$. This  implies that $vv'\in M_1$. However, by Proposition \ref{spc} (IV),  we have $k_3=k_{21}=0$, which implies $k_{21}+k_{23}+k_{3}\le 1$, a contradiction.

Let
\[t=2e-2f-k_5+k_6.\]
By Proposition \ref{t-t}, we get  the following table. 
\begin{table}[H]
\centering
\caption{The values of $t_{1}$ and $t_{2}$}
\begin{tabular}{|p{1.5cm}<{\centering}|p{0.7cm}<{\centering}|p{2.35cm}<{\centering}p{2.35cm}<{\centering}|p{2.35cm}<{\centering}p{2.7cm}<{\centering}|}\hline
                   &         &\multicolumn{2}{c|}{$\mathcal{L}_{01}$}  &\multicolumn{2}{c|}{$\mathcal{L}_{00}$ }  \\ \hline
     $a$/$b$/$c$    &$\alpha$        &$t_{1}$           &$t_{2}$               &$t'_{1}$            &$t'_{2}$ \\ \hline

         1          &1    &$ t$         &$t-2$              &$t+2$         &$-t+2$ \\ \hline
         0          &0   &$t-2$        &$t$               &$ t$           &$-t$  \\ \hline\end{tabular}
\end{table}
We can calculate $P_{ij}$ using the same arguments as in Case 1 and \eqref{Lem:main-equaltiy-puv-q} still holds. \\



\noindent {\bf Case 3.}  $k_{1}+k_{2}+k_{3}-k_{4}=2$.\\

Using Proposition \ref{spc}, it is easy to see  $k_4=0$. Recall that $k_1,k_3\le 1$. There are 4 choice of $\{k_1,k_2,k_3\}$ and then 16 choices of $\{a,b,c\}$ (see Table 4). Let
\[t=2e-2f-k_5+k_6-1.\]
By Proposition \ref{t-t}, we get the following table.

\begin{table}[H]\label{table-=2}
\centering
\caption{The values of $t_{1}$ and $t_{2}$}\label{table-ab/ac}
\begin{tabular}{|p{2.35cm}<{\centering}|p{1.cm}<{\centering}|p{2.3cm}<{\centering}p{2.3cm}<{\centering}|p{2.3cm}<{\centering}p{2.3cm}<{\centering}|}\hline
                      &         &\multicolumn{2}{c|}{$\mathcal{L}_{01}$}  &\multicolumn{2}{c|}{$\mathcal{L}_{00}$ }  \\ \hline
 $ab$/$ac/bc/b_1b_2$   &$\alpha$        &$t_{1}$           &$t_{2}$               &$t'_{1}$            &$t'_{2}$ \\ \hline
       00        &0              &$t-2$        &$t+2$            &$t$         &$-t-2$  \\ \hline
       10        &1              &$t$        &$t$            &$t+2$         &$-t$ \\ \hline
       01       &1               &$t$        &$t$            &$t+2$         &$-t$ \\ \hline
       11       &2               &$t+2$        &$t-2$            &$t+4$         &$-t+2$ \\ \hline
\end{tabular}
\end{table}

Combining Table \ref{table-ab/ac} and the argument as in Case 1,  \eqref{main-Pij} equals 
\begin{align*}
&P_{01}=P_{10}\\
=&\frac{1}{2^{k_5+k_6+2}}\sum_{e=0}^{k_{5}}\sum_{f=0}^{k_{6}}\binom{k_{5}}{e}\binom{k_{6}}{f} \left[\Phi(t-2,t+2)+2\Phi(t,t)+\Phi(t+2,t-2)  \right]\\
=&\frac{1}{2^{k_5+k_6+2}}\sum_{e=0}^{k_{5}}\sum_{f=0}^{k_{6}}\binom{k_{5}}{e}\binom{k_{6}}{f} \bigg[\Phi(-t-4,-t)+2\Phi(-t-2,-t-2)+\Phi(-t,-t-4)  \bigg].
\end{align*}
Note that the third equality holds by replacing $e$ with $k_5-e$ and $f$ with $k_6-f$.
Similarly,
\begin{align*}
&P_{00}=P_{11}\\
=&\frac{1}{2^{k_5+k_6+2}}\sum_{e=0}^{k_{5}}\sum_{f=0}^{k_{6}}\binom{k_{5}}{e}\binom{k_{6}}{f} \bigg[\Phi(t,-t-2)+2\Phi(t+2,-t)+\Phi(t+4,-t+2) \bigg]\\
=&\frac{1}{2^{k_5+k_6+2}}\sum_{e=0}^{k_{5}}\sum_{f=0}^{k_{6}}\binom{k_{5}}{e}\binom{k_{6}}{f}\bigg[\Phi(-t-2,t)+2\Phi(-t,t+2)+\Phi(-t+2,t+4) \bigg].
\end{align*}

Therefore,
\begin{align}\label{Case4:puv-quv}
 &p_{uv}-q_{uv}=2P_{01}-2P_{00}\notag\\
=&\frac{1}{2^{k_5+k_6+2}}\sum_{e=0}^{k_{5}}\sum_{f=0}^{k_{6}}\binom{k_{5}}{e}\binom{k_{6}}{f} \bigg[\Phi(t-2,t+2)+2\Phi(t,t)+\Phi(t+2,t-2)\notag\\
&+ \Phi(-t-4,-t)+2\Phi(-t-2,-t-2)+\Phi(-t,-t-4)    \bigg]\notag\\
&- \frac{1}{2^{k_5+k_6+2}}\sum_{e=0}^{k_{5}}\sum_{f=0}^{k_{6}}\binom{k_{5}}{e}\binom{k_{6}}{f}\bigg[\Phi(t,-t-2)+2\Phi(t+2,-t)+\Phi(t+4,-t+2)\notag\\
&+\Phi(-t-2,t)+2\Phi(-t,t+2)+\Phi(-t+2,t+4) \bigg] \notag\\
\geq &\frac{1}{2^{k_5+k_6+1}}\sum_{e=0}^{k_{5}}\sum_{f=0}^{k_{6}}\binom{k_{5}}{e}\binom{k_{6}}{f}\cdot\Lambda,
\end{align}
where
\[
\Lambda:=\Phi(t,t)+\Phi(-t-2,-t-2)-\Phi(t+2,-t)-\Phi(-t,t+2)
\]
and the last inequality holds by Lemma \ref{lembb-bb3}.
If $t \geq 0$, by Lemmas \ref{lem3.5} and \ref{lembb-bb} (i),
\begin{align*}
\Lambda \geq & \Phi(-t,-t)-\Phi(-t+2,-t+2)\\
= & B\left(s_1,\frac{s_1+t}2\right)B\left(s_2,\frac{s_2+t}2\right)-B\left(s_1,\frac{s_1+t}2-1\right)B\left(s_2,\frac{s_2+t}2-1\right)
\\
= &\Theta\left(\frac{1}{2^{s_{1}}}\binom{s_{1}}{\frac{s_{1}+t}{2}}+\frac{1}{2^{s_{2}}}\binom{s_{2}}{\frac{s_{2}+t}{2}}\right)
=\Theta\left(\frac{1}{2^{s_{1}}}\binom{s_{1}}{\frac{s_{1}-t}{2}}+\frac{1}{2^{s_{2}}}\binom{s_{2}}{\frac{s_{2}-t}{2}}\right).
\end{align*}
Otherwise, by  Lemmas \ref{lem3.5} and \ref{lembb-bb} (ii),
\begin{align*}
\Lambda \geq &  \Phi(t,t)-\Phi(t+2,t+2)\\
= & B\left(s_1,\frac{s_1-t}2\right)B\left(s_2,\frac{s_2-t}2\right)-B\left(s_1,\frac{s_1-t}2-1\right)B\left(s_2,\frac{s_2-t}2-1\right)
\\
= &\Theta\left(\frac{1}{2^{s_{1}}}\binom{s_{1}}{\frac{s_{1}-t}{2}}+\frac{1}{2^{s_{2}}}\binom{s_{2}}{\frac{s_{2}-t}{2}}\right).
\end{align*}
In both cases, we have
\begin{align}\label{Case4:bound-lambda}
\Lambda \ge \Theta\left(\frac{1}{2^{s_{1}}}\binom{s_{1}}{\frac{s_{1}-t}{2}}+\frac{1}{2^{s_{2}}}\binom{s_{2}}{\frac{s_{2}-t}{2}}\right).
\end{align}
Note that 
\begin{align}\label{Case4:first-term}
  &\frac{1}{2^{k_5+k_6+1}}\sum_{e=0}^{k_{5}}\sum_{f=0}^{k_6}\binom{k_{5}}{e}\binom{k_6}{f}\left(\frac{1}{2^{s_{1}}}\binom{s_{1}}{\frac{s_{1}-t}{2}}\right)\notag\\
=&\frac{1}{2^{s_{1}+k_5+k_6+1}} \sum_{e=0}^{k_{5}}\sum_{f=0}^{k_{6}}\binom{k_{5}}{e}\binom{k_{6}}{k_6-f}\binom{s_{1}}{\frac{s_{1}+k_{5}-k_6+1}{2}-e+f}\notag\\
=&\frac{1}{2^{s_{1}+k_5+k_6+1}} \binom{s_{1}+k_5+k_6}{\frac{s_{1}+k_5+k_6+1}{2}}\notag\\
=&\Theta\left(\frac{1}{\sqrt{s_{1}+k_5+k_6+1}}\right)\geq \Theta\left(\frac{1}{\sqrt{d(u)+d(u')}}\right),
\end{align}
where the last inequality follows from \eqref{label-d(x)+d(x')}. Similarly, we can get
\begin{align}\label{Case4:second-term}
\frac{1}{2^{k_5+k_6+1}}\sum_{e=0}^{k_{5}}\sum_{f=0}^{k_6}\binom{k_{5}}{e}\binom{k_6}{f}\left(\frac{1}{2^{s_{2}}}\binom{s_{2}}{\frac{s_{2}-t}{2}}\right)\geq \Theta\left(\frac{1}{\sqrt{d(v)+d(v')}}\right).
\end{align}
Combining \eqref{Case4:puv-quv}, \eqref{Case4:bound-lambda}, \eqref{Case4:first-term}, \eqref{Case4:second-term}, we have \eqref{Lem:main-equaltiy-puv-q} holds. \\



\noindent {\bf Case 4.}  $k_{1}+k_{2}+k_{3}-k_{4}=0$.\\

Let
\[t=2e-2f-k_5+k_6-1.\]
If $k_{4}=1$, then $k_{1}=0$ by Proposition \ref{spc}, and then $a=0$ and $k_{2}+k_{3}=1$.
By Proposition \ref{t-t}, we get the  following table and complete the proof by the same argument as in Case 3. 

\begin{table}[H]
\centering
\caption{The values of $t_{1}$ and $t_{2}$}
\begin{tabular}{|p{2.35cm}<{\centering}|p{1.cm}<{\centering}|p{2.3cm}<{\centering}p{2.3cm}<{\centering}|p{2.3cm}<{\centering}p{2.3cm}<{\centering}|}\hline
                      &         &\multicolumn{2}{c|}{$\mathcal{L}_{01}$}  &\multicolumn{2}{c|}{$\mathcal{L}_{00}$ }  \\ \hline
 $bd$/$cd$   &$\alpha$        &$t_{1}$           &$t_{2}$               &$t'_{1}$            &$t'_{2}$ \\ \hline
   00        &0              &$t$        &$t$            &$t+2$         &$-t$  \\ \hline
   10        &1              &$t+2$        &$t-2$            &$t+4$         &$-t+2$ \\ \hline
   01       &-1               &$t-2$        &$t+2$            &$t$         &$-t-2$ \\ \hline
   11       &0               &$t$        &$t$            &$t+2$         &$-t$ \\ \hline
\end{tabular}
\end{table}

Now suppose $k_4=0$. This means $k_{1}=k_{2}=k_{3}=0$ and then $a=b=c=d=0$.
By Proposition \ref{t-t},  we obtain the following table.
\begin{table}[H]
\centering
\caption{The values of $t_{1}$ and $t_{2}$}
\begin{tabular}{|p{1.2cm}<{\centering}|p{2.3cm}<{\centering}p{2.3cm}<{\centering}|p{2.3cm}<{\centering}p{3cm}<{\centering}|}\hline
                          & \multicolumn{2}{c|}{$\mathcal{L}_{01}$}  &\multicolumn{2}{c|}{$\mathcal{L}_{00}$ }  \\ \hline
  $\alpha$       &$t_{1}$           &$t_{2}$                       &$t'_{1}$            &$t'_{2}$ \\ \hline
   0  &$t$              &$t$                  &$t+2$                &$-t$  \\ \hline
\end{tabular}
\end{table}
By  \eqref{main-Pij},
\begin{align*}
P_{10}=P_{01}=&\frac{1}{2^{k_5+k_6}}\sum_{e=0}^{k_{5}}\sum_{f=0}^{k_{6}}\binom{k_{5}}{e}\binom{k_{6}}{f}\Phi(t,t)\\
=&\frac{1}{2^{k_5+k_6}}\sum_{e=0}^{k_{5}}\sum_{f=0}^{k_{6}}\binom{k_{5}}{e}\binom{k_{6}}{f}\Phi(-t-2,-t-2).
\end{align*}
Similarly,
\begin{align*}
P_{00}=P_{11}=&\frac{1}{2^{k_5+k_6}}\sum_{e=0}^{k_{5}}\sum_{f=0}^{k_{6}}\binom{k_{5}}{e}\binom{k_{6}}{f}\Phi(t+2,-t)\\
=&\frac{1}{2^{k_5+k_6}}\sum_{e=0}^{k_{5}}\sum_{f=0}^{k_{6}}\binom{k_{5}}{e}\binom{k_{6}}{f}\Phi(-t,t+2).
\end{align*}

From the above four equalities, we have 
\begin{align}\label{Case2:puv-quv}
p_{uv}-q_{uv}=\frac{1}{2^{k_5+k_6}}\sum_{e=0}^{k_{5}}\sum_{f=0}^{k_{6}}\binom{k_{5}}{e}\binom{k_{6}}{f}\cdot\Lambda,
\end{align}
where
\[
\Lambda:=\Phi(t,t)+\Phi(-t-2,-t-2)-\Phi(t+2,-t)-\Phi(-t,t+2).
\]
By Lemmas \ref{lem3.5}  and \ref{lembb-bb}, together with the similar argument as in Case 1, we have 
\begin{align}\label{Case2:bound-lambda}
\Lambda \ge \Theta\left(\frac{1}{2^{s_{1}}}\binom{s_{1}}{\frac{s_{1}-t}{2}}+\frac{1}{2^{s_{2}}}\binom{s_{2}}{\frac{s_{2}-t}{2}}\right).
\end{align}
Substituting  \eqref{Case2:bound-lambda} and $t=2e-2f-k_5+k_6-1$ into \eqref{Case2:puv-quv} and using  the similar argument as in Case 1, we can get
\begin{align}\label{Case2:first-term}
\frac{1}{2^{k_5+k_6}}\sum_{e=0}^{k_{5}}\sum_{f=0}^{k_{6}}\binom{k_{5}}{e}\binom{k_{6}}{f}\left(\frac{1}{2^{s_{1}}}\binom{s_{1}}{\frac{s_{1}-t}{2}}\right)\ge \Theta \left(\frac{1}{\sqrt{d(u)+d(u')}}\right), 
\end{align}
and 
 \begin{align}\label{Case2:second-term}
\frac{1}{2^{k_5+k_6}}\sum_{e=0}^{k_{5}}\sum_{f=0}^{k_6}\binom{k_{5}}{e}\binom{k_6}{f}\left(\frac{1}{2^{s_{2}}}\binom{s_{2}}{\frac{s_{2}-t}{2}}\right)\geq \Theta\left(\frac{1}{\sqrt{d(v)+d(v')}}\right).
\end{align}

Combining \eqref{Case2:puv-quv}, \eqref{Case2:bound-lambda}, \eqref{Case2:first-term}, \eqref{Case2:second-term}, we conclude that \eqref{Lem:main-equaltiy-puv-q} holds. This complete the proof of Lemma \ref{th-main}.

\section{Concluding remarks}

In \cite{Lin2021}, Lin and Zeng studied graphs whose Max-Bisection achieves the Shearer's bound systematically. The prefect matching condition is vital in their proof. In this paper, we replace it with minimum degree condition and solve an open problem. We believe that our method has potential applications in studying bisections of $H$-free graphs without perfect matching. 

In general the  $C_4$-free condition seems like the most natural to force a bisection larger than that guaranteed in a general graph. Combining Theorem \ref{cn} and the classical  Bondy-imonovits Theorem \cite{Bondy1974} on the number of edges in a $C_{2k}$-free graph, we know every $C_4$-free graph  with $m$ edges and minimum degree at least 2 has a bisection of size at least $m/2+\Omega(m^{2/3})$. On the other hand, Alon, Bollob\'{a}s,  Krivelevich and Sudakov \cite{Alo2003} showed that such a  graph has a bipartition of size at least $m/2+\Theta(m^{5/6})$. It is natural to find the maximal  $c$ such that every $C_4$-free graph  with $m$ edges and minimum degree at least 2 has a bisection of size at least $m/2+\Omega(m^{c})$.

It also would be interesting to generalise Theorem \ref{the1.1} to complete bipartite forbidden subgraphs. For $t\ge s\ge 2$, an accessible result, given by Hou and Wu \cite{Hou2021}, is the existence of a bisection of size at least $m/2+\Omega(n)$ in $K_{s,t}$-free graphs with $n$ vertices, $m$ edges and minimum degree $s$.

\normalsize

\section*{Appendix}
\begin{proof}[Proof of Lemma \ref{lembb-bb3}]

We prove Lemma \ref{lembb-bb3} using a coarse way. For  convenience, let 
\begin{align*}
\Lambda(t):= &\Phi(t-2,t+2)+\Phi(t+2,t-2)+\Phi(-t-4,-t)+\Phi(-t,-t-4)\\
-&\Phi(t,-t-2)-\Phi(-t+2,t+4)-\Phi(-t-2,t)-\Phi(t+4,-t+2). 
\end{align*}
It is easy to see that 
\[
\Lambda(t)=\Lambda(-t-2). 
\]
Thus, it suffices to consider $t\ge -1$. 
For any $x\in \mathbb{Z}$  and $i\in\{1,2\}$, let 
\[
f_i(x)=B\left(s_{i}, \frac{s_{i}-x}{2}\right).
\]
Note that $h_i(x)$ is an increasing function. 
If $t=-1$, then 
\begin{align*}
\Lambda(-1)=2\left(f_1(-3)f_2(1)+f_1(1)f_2(-3)-f_1(-1)f_2(-1)-f_1(3)f_2(3)\right). 
\end{align*}
In this case, we have $s_1,s_2\ge 3$.
For convince, let 
\[
\alpha_i=\frac{1}{2^{s_1}} \binom{s_1}{{\lfloor\frac{s_1-3}{2}\rfloor+i}} \text{ and } \beta_i=\frac{1}{2^{s_2}} \binom{s_2}{{\lfloor\frac{s_2-3}{2}\rfloor+i}} 
\]

\begin{align*}
\frac{1}{2}\Lambda(-1)=&\left(f_1(3)+\sum_{i=1}^3\alpha_i\right)\left(f_2(3)+\beta_1\right)+\left(f_1(3)+\alpha_1\right)\left(f_2(3)+\sum_{i=1}^3\beta_i\right)\\
-&\left(f_1(3)+\sum_{i=1}^2\alpha_i\right)\left(f_2(3)+\sum_{i=1}^2\beta_i\right)-f_1(3)f_2(3)\\
=&(\alpha_1+\alpha_3)f_2(3)+(\beta_1+\beta_3)f_1(3)+\Sigma\ge \Sigma, 
\end{align*}
where 
\begin{align*}
\Sigma=&(\alpha_1+\alpha_2+\alpha_3)\beta_1+\alpha_1(\beta_1+\beta_2+\beta_3)-(\alpha_1+\alpha_2)(\beta_1+\beta_2)\\
=& \alpha_3\beta_1+\alpha_1\beta_1+\alpha_1\beta_3-\alpha_2\beta_2. 
\end{align*}

In the following, we show $\Sigma\ge 0$  and then $\Lambda(-1)\ge 0$.  If $s_1$ and $s_2$ are odd, then $\alpha_1=\alpha_2$ and $\beta_1=\beta_2$. Thus,
\[\Sigma=\alpha_3\beta_1+\alpha_1\beta_3\ge 0.\]
If  $s_1$ is even and $s_2$ is odd, then $\beta_1=\beta_2$. Therefore,
\begin{align*}
\Sigma\ge(\alpha_3+\alpha_1-\alpha_2)\beta_1
=\frac{1}{2^{s_1}}\left( \binom{s_1}{{\frac{s_1-2}{2}}}+ \binom{s_1}{{\frac{s_1+2}{2}}}-\binom{s_1}{{\frac{s_1}{2}}}\right)\beta_1
\ge 0.
\end{align*}
The last inequality is from the fact that
\begin{align*}
& 2\times \binom{s_1}{{\frac{s_1-2}{2}}}\div\binom{s_1}{{\frac{s_1}{2}}}
=2 \times \frac{s_1!}{\frac{s_1-2}{2}!\frac{s_1+2}{2}!} \times \frac{\frac{s_1}{2}!\frac{s_1}{2}!}{s_1!}
=\frac{2s_1}{s_1+2}
\ge 1.
\end{align*}
Simarily, if  $s_1$ is odd and $s_2$ is even, then $\alpha_1=\alpha_2$ and so 
\begin{align*}
\Sigma\ge\alpha_1(\beta_3+\beta_1-\beta_2)
=\frac{1}{2^{s_2}}\left( \binom{s_2}{{\frac{s_2-2}{2}}}+ \binom{s_2}{{\frac{s_2+2}{2}}}-\binom{s_2}{{\frac{s_2}{2}}}\right)\alpha_1
\ge0.
\end{align*}
Suppose that  $s_1$ and $s_2$ are even. We have 
\begin{align*}
\Sigma=3 \times \left(\frac{1}{2^{s_1}} \binom{s_1}{{\frac{s_1-2}{2}}} \times \frac{1}{2^{s_2}} \binom{s_2}{{\frac{s_2-2}{2}}}\right)-\left(\frac{1}{2^{s_1}} \binom{s_1}{{\frac{s_1}{2}}} \times \frac{1}{2^{s_2}} \binom{s_2}{{\frac{s_2}{2}}}\right)
\geq  0, 
\end{align*}
where the last inequality holds as 
\begin{align*}
& 3 \times \left(\binom{s_1}{{\frac{s_1-2}{2}}} \times  \binom{s_2}{{\frac{s_2-2}{2}}}\right) \div \left(\binom{s_1}{{\frac{s_1}{2}}} \times \binom{s_2}{{\frac{s_2}{2}}}\right)\\
=&3 \times \frac{s_1!}{\frac{s_1-2}{2}!\frac{s_1+2}{2}!} \times \frac{s_2!}{\frac{s_2-2}{2}!\frac{s_2+2}{2}!} \times \frac{\frac{s_1}{2}!\frac{s_1}{2}!}{s_1!} \times \frac{\frac{s_2}{2}!\frac{s_2}{2}!}{s_2!}\\
=&\frac{3s_1s_2}{(s_2+2)\times(s_2+2)}
\ge  1.
\end{align*}
Thus, 
$\Sigma\ge 0$ and then $\Lambda(-1)\ge 0$. \\

Suppose that $t\ge 0$. As above, define
\[
\alpha_i=\sum_{j=1}^{i}\frac{1}{2^{s_1}} \binom{s_1}{{\lfloor\frac{s_1-t-4}{2}\rfloor+j}} \text{ and } \beta_i=\sum_{j=1}^{i}\frac{1}{2^{s_2}} \binom{s_2}{{\lfloor\frac{s_2-t-4}{2}\rfloor+j}}. 
\]

Note that $\Phi(t_1,t_2)=f_1(t_1)f_2(t_2)$. 
\begin{align}\label{lembda-t-1-2}
\Lambda(t)=&f_1(t-2)f_2(t+2)+f_1(t+2)f_2(t-2)+f_1(-t-4)f_2(-t)+f_1(-t)f_2(-t-4)\notag \\
-&f_1(t)f_2(-t-2)-f_1(-t+2)f_2(t+4)-f_1(-t-2)f_2(t)-f_1(t+4)f_2(-t+2)
\end{align}
Each term in the right hand of \eqref{lembda-t-1-2} can be expressed into the form 
\[
\left(f_1(t+4)+\alpha_i\right)\left(f_2(t+4)+\beta_j
\right)
\]
for some nonnegative integers $i,j$. For example, 
\[
f_1(t-2)f_2(t+2)=\left(f_1(t+4)+\alpha_3\right)\left(f_2(t+4)+\beta_1\right)
\]
and 
\[
f_1(t+2)f_2(t-2)=\left(f_1(t+4)+\alpha_{1}\right)  \left(f_2(t+4)+\beta_3\right). 
\]
The others terms is similar and we  omit it. Substituting those into \eqref{lembda-t-1-2}, we conclude that 
\[
\Lambda(t)=f_1(t+4)\times \Sigma_1+f_2(t+4)\times \Sigma_2+ \Sigma_3, 
\]
where
\[
\Sigma_1:=\beta_1+\beta_3+\beta_{t+2}+\beta_{t+4}-\beta_{t+3}-\beta_{2}-\beta_{t+1}, 
\]
\[
\Sigma_2:=\alpha_3+\alpha_{1}+\alpha_{t+4}+\alpha_{t+2}-\alpha_{2}-\alpha_{t+1}-\alpha_{t+3}, 
\]
and 
\begin{align*}
\Sigma_3:&=\alpha_3\beta_1+\alpha_{1}\beta_3+\alpha_{t+4}\beta_{t+2}+\alpha_{t+2}\beta_{t+4} -\alpha_{2}\beta_{t+3}-\alpha_{t+3}\beta_{2}\\
&\ge \alpha_{t+4}\beta_{t+2}-\alpha_{t+3}\beta_{2}+\alpha_{t+2}\beta_{t+4}-\alpha_{2}\beta_{t+3}
\\ & \ge 0.
\end{align*}
Recall that  $\alpha_i$ and $ \beta_i$ are increasing with respect to $i$, we also have $\Sigma_1, \Sigma_2\ge 0$. This completes the proof of Lemma \ref{lembb-bb3}.
\end{proof}

\end{document}